\providecommand{\U}[1]{\protect\rule{.1in}{.1in}}
\newtheorem{theorem}{Theorem}[section]
\newtheorem{corollary}[theorem]{Corollary}
\newtheorem{definition}[theorem]{Definition}
\newtheorem{lemma}[theorem]{Lemma}
\newtheorem{notation}[theorem]{Notation}
\newtheorem{proposition}[theorem]{Proposition}
\newtheorem{remark}[theorem]{Remark}
\begin{document}

\author{Dmitry Gourevitch}
\address{Dmitry Gourevitch, Faculty of Mathematics and Computer Science, Weizmann
Institute of Science, POB 26, Rehovot 76100, Israel }
\email{dimagur@weizmann.ac.il}
\urladdr{\url{http://www.wisdom.weizmann.ac.il/~dimagur}}
\author{Siddhartha Sahi}
\address{Siddhartha Sahi, Department of Mathematics, Rutgers University, Hill Center -
Busch Campus, 110 Frelinghuysen Road Piscataway, NJ 08854-8019, USA}
\email{sahi@math.rugers.edu}
\date{\today}
\title{Intertwining operators between line bundles on Grassmannians}

\begin{abstract}
Let $G=GL(n,F)$ where $F$ is a local field of arbitrary characteristic, and
let $\pi_{1},\pi_{2}$ be representations induced from characters of two
maximal parabolic subgroups $P_{1},P_{2}$. We explicitly determine the space
$Hom_{G}\left(  \pi_{1},\pi_{2}\right)  $ of intertwining operators and prove
that it has dimension $\leq1$ in all cases.

\end{abstract}
\keywords{Reductive group, maximal parabolic, degenerate principal series, derivatives of representations, Radon transform, cosine transform. \\
\indent 2010 MS Classification: 22E50, 44A05, 44A12}
\maketitle

\section{Introduction}

Let $G$ be a reductive group over a local field $F$; then $C^{\infty}\left(
G\right)  $ is a $G\times G$-module with left and right actions given by
$L_{g}f\left(  x\right)  =f\left(  g^{-1}x\right)  $, $R_{g}f\left(  x\right)
=f\left(  xg\right)  $. Let $P\subset G$ be a parabolic subgroup with modular
function $\Delta_{P}$ and let $\chi$ be a character of $P$. The induced
representation $I\left(  P,\chi\right)  $ is the right $G$-action on the space%
\begin{equation}
C^{\infty}\left(  G,P,\chi\right)  :=\left\{  f\in C^{\infty}\left(  G\right)
\mid L_{p^{-1}}f=\chi\left(  p\right)  \Delta_{P}^{1/2}\left(  p\right)
f\text{ for all }p\in P\right\}  , \label{=CG}%
\end{equation}
whose elements may also be regarded as smooth sections of a line bundle on
$G/P$.

We are primarily interested in the group $G=G_{n}:=GL(n,F)$ and its parabolic
subgroups $P=P_{p_{1},p_{2}}$, with $p_{1}+p_{2}=n$, consisting of matrices
$x\in G_{n}$ of the form
\begin{equation}
x=\left[
\begin{array}
[c]{ll}%
x_{11} & x_{12}\\
0 & x_{22}%
\end{array}
\right]  ;x_{ij}\in Mat_{p_{i}\times p_{j}}\text{.} \label{=x}%
\end{equation}
In this case $G/P$ is the Grassmannian of $p_{1}$-dimensional subspaces of
$F^{n}$. The characters of $P$ are of the form $\chi_{1}\otimes\chi_{2}\left(
x\right)  =\chi_{1}\left(  x_{11}\right)  \chi_{2}\left(  x_{22}\right)  $,
where $\chi_{i}$ is a character of $G_{p_{i}}$. Following \cite{BZ-Induced} we
write $\chi_{1}\times\chi_{2}$ instead of $I\left(  P,\chi_{1}\otimes\chi
_{2}\right)  $.\footnote{If $p_{1}$ (resp. $p_{2}$) $=0$ then $P=G$ and
$\chi_{1}\times\chi_{2}=\chi_{2}$ (resp. $\chi_{1}$)$.$}

Let $\pi_{1}=\chi_{1}\times\chi_{2}$ and $\pi_{2}=\chi_{3}\times\chi_{4}$ be
two such representations, where each $\chi_{i}$ is character of $G_{p_{i}}$
with $p_{1}+p_{2}=p_{3}+p_{4}=n$. Our main result is an explicit determination
of the space $Hom_{G_{n}}\left(  \pi_{1},\pi_{2}\right)  $ of intertwining
operators, or \emph{intertwiners} for short; in particular we prove that it
has dimension at most $1$.

It turns out that all intertwiners were previously known. The list includes
such examples as the Radon transform and cosine transform, which are of
considerable geometric interest; indeed these transforms were first
constructed and studied in a geometric context, their intertwining properties
being only recognized much later [\cite{gelfand-graev-rosu}, \cite{alesker}].
One can further supplement the list with two simple examples, scalar operators
for $\pi_{1}=\pi_{2}$, and certain rank $1$ operators obtained as a
composition of two rank $1$ Radon transforms. Finally, for the middle
Grassmannian over archmidean fields ($F=\mathbb{R}$ or $\mathbb{C}$), one also
has discrete families of intertwiners given by certain Capelli-type\emph{
differential} operators. Our main contribution, in addition to the
multiplicity 1 statement, is to show that there are no other intertwiners.

We fix some notation to describe our main results succinctly. For $z\in F$ let
$\nu\left(  z\right)  $ denote the positive scalar by which the additive Haar
measure on $F$ transforms under multiplication by $z$. We will also regard
$\nu$ as a character of $G_{n}$ defined by $\nu(g):=\nu(\det g)$, and we note
that the modular function of $P=P_{p_{1},p_{2}}$ is $\Delta_{P}=\nu^{p_{2}%
}\otimes\nu^{-p_{1}}$. For integers $i\leq j$ we write $[i,j)$ for the
character $\nu^{\frac{i+j}{2}}$of $G_{j-i}$. If $\pi=$ $\chi_{1}\times\chi
_{2}$ then we write $\tilde{\pi}=$ $\chi_{2}\times\chi_{1}$. Finally we write
$\pi_{1}\dashrightarrow\pi_{2}$ to mean that there exists a non-zero
intertwining operator from $\pi_{1}$ to $\pi_{2}$.

\begin{proposition}
\label{std}For any $\pi=$ $\chi_{1}\times\chi_{2}$ we have $\pi\dashrightarrow
\pi$ and $\pi\dashrightarrow\tilde{\pi}$.
\end{proposition}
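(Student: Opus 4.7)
The first assertion is immediate: the identity operator is a nonzero element of $\Hom_{G_n}(\pi,\pi)$, so $\pi \dashrightarrow \pi$.

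For $\pi \dashrightarrow \tilde\pi$, I would construct the standard intertwining operator. Let $w \in G_n$ be a block permutation matrix whose conjugation action exchanges the two factors of the common Levi $L = G_{p_1} \times G_{p_2}$ of $P := P_{p_1,p_2}$ and $P' := P_{p_2,p_1}$, and sends $P$ to $P'$. Let $\bar N$ denote the unipotent radical of the parabolic opposite to $P$. For $f \in \pi$ consider the formal integral
$$
(Mf)(g) := \int_{\bar N} f(w \bar n g)\, d\bar n .
$$
A routine change of variables, exploiting the compatibility of Haar measure on $\bar N$ with the modular characters $\Delta_P$ and $\Delta_{P'}$, shows that whenever the integral converges absolutely, $Mf \in \tilde\pi$ and $M$ is $G_n$-equivariant.

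The main obstacle is convergence of $M$, which does not hold for all $\chi_1,\chi_2$. I would first treat the nonempty open cone of characters for which the integrand decays sufficiently fast on $\bar N$ to make $M$ absolutely convergent, and verify that $M$ is nonzero there by evaluating it on a concrete test vector, for instance choosing $f \in \pi$ to be supported in a small neighbourhood of $w^{-1}$ and strictly positive at $w^{-1}$, so that the integrand is a nonnegative bump on a small open subset of $\bar N$ and the resulting integral is plainly positive.

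For arbitrary $\chi_1,\chi_2$, I would deform them to $\chi_i \nu^{s_i}$ with complex parameters $s_i$ and view $M$ as a meromorphic family of intertwiners in $(s_1,s_2)$. Classical work on standard intertwining operators for $\GL_n$ (Gindikin--Karpelevich in the $p$-adic unramified case, and its extensions by Shahidi, Wallach and others in general) produces an explicit meromorphic scalar $c(s_1,s_2)$, expressible as a product of local zeta factors of rank-one subquotients, such that the renormalized operator $c(s_1,s_2)^{-1} M$ extends holomorphically to all of $\C^2$ and is nowhere vanishing. Specialization at $(s_1,s_2) = (0,0)$ then yields the required nonzero element of $\Hom_{G_n}(\pi,\tilde\pi)$. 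The delicate point, where I expect the main work to lie, is confirming that $c$ cancels all zeros of $M$ as well as all its poles; this is typically handled by computing the normalized operator on a single well-chosen vector (e.g., a spherical vector in the unramified setting, or an explicit $K$-finite vector archimedeanly) and checking that the answer is nonzero at the specialization.
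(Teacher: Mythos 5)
Your proposal is correct and follows essentially the same route as the paper: the identity operator for $\pi\dashrightarrow\pi$, and the standard (Knapp--Stein) intertwining operator $I(P,\chi)\to I(\bar P,\chi)\approx\tilde\pi$ for the second claim. The only difference is that the paper simply invokes its Proposition~\ref{Prop:KnSt}, citing Knapp--Stein and Waldspurger for existence and nonvanishing, whereas you sketch the underlying convergence, meromorphic continuation, and normalization argument that those references supply.
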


\begin{proposition}
\label{mix1}Fix an integer $k>0$ and for each integer $0\leq i<k$ define
$\alpha_{i}=[0,i)\times\lbrack i,k)$; then for all integers $0\leq j\neq i<k$
we have%
\[
\widetilde{\alpha_{j}}\dashrightarrow\alpha_{i}\text{ .}%
\]

\end{proposition}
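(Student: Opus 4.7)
By Proposition~\ref{std} there is a non-zero intertwiner $M:\widetilde{\alpha_j}\dashrightarrow\alpha_j$, so it is enough to exhibit a non-zero $R:\alpha_j\to\alpha_i$ with $R\circ M\neq 0$. The starting observation is a direct computation of the normalized modular twist: one finds
\[
\chi_m\cdot\Delta^{1/2}_{P_{m,k-m}}\;=\;\nu^{k/2}(\det(\cdot)),
\]
\emph{independently of $m$}. Hence as $G_k$-modules
\[
\alpha_m\;\cong\;\nu^{k/2}\otimes C^\infty(\mathrm{Gr}(m,k)),
\]
and the problem reduces to building a non-zero $G_k$-equivariant map between smooth functions on two Grassmannians of $F^k$. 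Note, in contrast, that the analogous computation for $\widetilde{\alpha_j}$ yields $\nu^{(k+2j)/2}\otimes\nu^{(2j-k)/2}$, which is not a scalar multiple of $\det$, so $\widetilde{\alpha_j}$ is genuinely sections of a non-trivial line bundle.

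\textbf{Construction of $R$.} Assume $i<j$; the case $i>j$ is symmetric with the roles of $V$ and $W$ exchanged. Let $Z=\{(V,W)\in\mathrm{Gr}(j,k)\times\mathrm{Gr}(i,k):W\subset V\}$, with projections $p$ and $q$ to the two factors. Each fiber $q^{-1}(W)\cong\mathrm{Gr}(j-i,k-i)$ is a compact Grassmannian carrying a canonical $\mathrm{Stab}(W)$-invariant probability measure $\mu_W$, so the Radon transform
\[
(Rf)(W)\;:=\;\int_{q^{-1}(W)}f(V)\,d\mu_W(V)
\]
defines a $G_k$-equivariant operator. It is non-zero because $R(\mathbf{1})=\mathbf{1}$.

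\textbf{Non-vanishing of $R\circ M$.} The map $M$ may be realized as a normalized standard intertwining operator, namely an integral over the unipotent radical of $P_{j,k-j}$. By the Gindikin--Karpelevich formula (and its archimedean analogue), such an $M$ sends the spherical vector of $\widetilde{\alpha_j}$ to a non-zero multiple of the constant function $\mathbf{1}\in C^\infty(\mathrm{Gr}(j,k))$. Combined with $R(\mathbf{1})=\mathbf{1}$, this gives $R\circ M\neq 0$, as required.

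\textbf{Main obstacle.} The delicate step is the non-vanishing assertion above: one must check that the Gindikin--Karpelevich factor for $M$ does not vanish at the specific characters involved (and that $\widetilde{\alpha_j}$ actually has a spherical vector on which $M$ can act). An alternative route that sidesteps this issue is to construct $\widetilde{\alpha_j}\to\alpha_i$ in one stroke as an orbit integral on $G_k$ for the $(P_{k-j,j}\times P_{i,k-i})$-action: the double-coset space is finite (parameterized by $\dim(V\cap W)$), convergence is automatic because the orbit fibers are compact, and one then identifies the unique orbit on which the character matching conditions produce a non-trivial equivariant distribution.
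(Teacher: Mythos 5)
Your reduction of $\alpha_m$ to the trivial bundle is correct ($\chi_m\Delta_{P_{m,k-m}}^{1/2}=\nu^{k/2}\otimes\nu^{k/2}$ extends to $\nu^{k/2}\circ\det$), but the construction of $R$ fails at its key point: the fiber $q^{-1}(W)\cong P_{i,k-i}/P_{i,j-i,k-j}$ carries \emph{no} $\mathrm{Stab}(W)$-invariant measure. The stabilizer of $W$ surjects onto $GL(F^k/W)\cong G_{k-i}$, and a Grassmannian $G_{k-i}/P_{j-i,k-j}$ admits an invariant measure only under the maximal compact subgroup, not under the full general linear group (already $GL(2,\mathbb{R})$ acting on $\mathbb{P}^1(\mathbb{R})$ preserves no measure; equivalently $\Delta_{P_{i,k-i}}|_{P_{i,j-i,k-j}}\neq\Delta_{P_{i,j-i,k-j}}$). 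So your $R$ is not $G_k$-equivariant, and this cannot be repaired: a nonzero element of $\mathrm{Hom}_{G_k}(\alpha_j,\alpha_i)$ with $i\neq j$ would contradict Theorem \ref{thm:main}, since the pair $(\alpha_j,\alpha_i)$ is neither standard, mixed, nor exceptional (the two segments of $\alpha_j$ are disjoint, unlike those of the sources $\gamma,\tilde\gamma,\tilde\beta$ in Proposition \ref{mix2}, and $\alpha_j$ is not a central twist of any $\widetilde{\alpha_{j'}}$). The genuine Radon transforms intertwine the bundles of Proposition \ref{mix2}, not the ``trivial'' bundles $\alpha_j$ and $\alpha_i$; your fallback orbit-integral suggestion meets the same obstruction, as the incidence orbit supports no equivariant measure for these characters.

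The intended operator is far more elementary and has rank one. By Lemma \ref{lem:FinSubFrob} the character $\phi=\nu^{k/2}$ of $G_k$ satisfies $\widetilde{\alpha_j}\twoheadrightarrow\phi$ (compute $\chi\Delta_{P_{k-j,j}}^{-1/2}=\nu^{k/2}\otimes\nu^{k/2}$) and $\phi\hookrightarrow\alpha_i$ (compute $\chi\Delta_{P_{i,k-i}}^{1/2}=\nu^{k/2}\otimes\nu^{k/2}$), so the composite $\widetilde{\alpha_j}\to\phi\to\alpha_i$ is the desired nonzero intertwiner. This also explains why the source must be $\widetilde{\alpha_j}$ rather than $\alpha_j$: one checks $\chi_j\Delta_{P_{j,k-j}}^{-1/2}=\nu^{(2j-k)/2}\otimes\nu^{(2j+k)/2}\neq\nu^{k/2}\otimes\nu^{k/2}$, so $\alpha_j$ contains $\phi$ as a submodule but not as a quotient, and your plan of first applying the Knapp--Stein operator $M:\widetilde{\alpha_j}\to\alpha_j$ and then mapping onward cannot work even in principle.
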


\begin{proposition}
\label{mix2}Fix integers $0<i<j<k$ and define $\beta=[0,j)\times\lbrack i,k)$,
$\gamma=[0,k)\times\lbrack i,j)$, then we have%
\[
\gamma\dashrightarrow\beta,\tilde{\gamma}\dashrightarrow\beta,\tilde{\beta
}\dashrightarrow\gamma,\tilde{\beta}\dashrightarrow\tilde{\gamma}\text{.}%
\]

\end{proposition}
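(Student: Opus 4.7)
The plan is to use the Bernstein--Zelevinsky structure of induced representations to show that $\gamma$ is the unique irreducible subrepresentation of $\beta$ and the unique irreducible quotient of $\tilde\beta$, and then to obtain the four intertwiners by combining this with the isomorphism $\gamma\cong\tilde\gamma$.

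Since $0<i<j<k$, the segments $[i,j)$ and $[0,k)$ underlying $\gamma=[0,k)\times[i,j)$ satisfy $[i,j)\subsetneq[0,k)$; by the Bernstein--Zelevinsky irreducibility criterion (one segment strictly contained in the other), $\gamma$ is irreducible, and the standard intertwiner from Proposition~\ref{std} then yields an isomorphism $\gamma\cong\tilde\gamma$. By contrast, the segments $[0,j)$ and $[i,k)$ underlying $\beta$ are linked (their union $[0,k)$ is a segment, and neither contains the other), so $\beta$ and $\tilde\beta$ each have length two. Therefore it suffices to construct the two non-zero intertwiners $\gamma\dashrightarrow\beta$ and $\tilde\beta\dashrightarrow\gamma$; composing with $\gamma\cong\tilde\gamma$ then yields the remaining two claims.

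To produce $\gamma\dashrightarrow\beta$, I work inside the three-step induced representation $A:=[0,j)\times[j,k)\times[i,j)$ on $GL_n$. Proposition~\ref{mix1} applied within $GL_k$ (valid since $j\neq 0$) gives a non-zero map $[0,k)=\widetilde{\alpha_0}\dashrightarrow\alpha_j=[0,j)\times[j,k)$; inducing from $P_{k,j-i}$ up to $GL_n$, tensored with $[i,j)$, produces an embedding $\gamma\hookrightarrow A$. Proposition~\ref{mix1} applied within $GL_{k-i}$, twisted by $\nu^{i}$, gives a non-zero surjection $[j,k)\times[i,j)\twoheadrightarrow[i,k)$; inducing, tensored with $[0,j)$ on the left, yields a surjection $A\twoheadrightarrow\beta$. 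The composition $\gamma\hookrightarrow A\twoheadrightarrow\beta$ is the desired intertwiner. An analogous argument inside the three-step induction $[i,j)\times[j,k)\times[0,j)$, using the dual embedding $[i,k)\hookrightarrow[i,j)\times[j,k)$ and the dual surjection $[j,k)\times[0,j)\twoheadrightarrow[0,k)$, produces $\tilde\beta\dashrightarrow\gamma$.

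The main obstacle is verifying non-vanishing of these compositions. Because $\gamma$ is irreducible, this reduces to showing that the embedded copy of $\gamma$ in $A$ is not contained in the kernel of $A\twoheadrightarrow\beta$. By the Bernstein--Zelevinsky multiplicity-free decomposition, $A$ has three distinct composition factors, namely $Z([0,j),[j,k),[i,j))$, $Z([0,j),[i,k))$, and $\gamma=Z([0,k),[i,j))$, while $\beta$ has only the latter two; hence the kernel of $A\twoheadrightarrow\beta$ has the single composition factor $Z([0,j),[j,k),[i,j))$, which is distinct from $\gamma$. Consequently the $\gamma$-subrepresentation of $A$ is not in the kernel, and the composition is non-zero. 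This multiplicity-free composition-factor input, proved by Zelevinsky in the $p$-adic setting, admits analogues over all local fields via the Langlands classification together with meromorphic continuation of standard intertwining operators; verifying it uniformly in the setting of the paper is the technical heart of the argument.
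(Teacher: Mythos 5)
Your construction of the intertwiners is essentially the paper's: both routes factor the desired maps through three-step induced modules such as $[0,j)\times\lbrack j,k)\times\lbrack i,j)$, using Proposition \ref{mix1} together with induction by stages. The point of divergence is the non-vanishing of the composites, and this is where your argument has a genuine gap. You reduce non-vanishing to (a) irreducibility of $\gamma$ and $\tilde{\gamma}$ over every local field (also needed for your reduction of the four maps to two via $\gamma\cong\tilde{\gamma}$), and (b) complete knowledge, with multiplicities, of the Jordan--H\"older series of $[0,j)\times\lbrack j,k)\times\lbrack i,j)$ and of $\beta$. These are theorems of Zelevinsky in the $p$-adic case, but the paper works uniformly over all local fields of arbitrary characteristic, and in the archimedean case the corresponding statements (irreducibility criteria for products of characters, multiplicity-free socle filtrations of the relevant degenerate principal series) are substantial results in their own right --- of roughly the same depth as the theorem being proved. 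You acknowledge this by calling the uniform verification ``the technical heart of the argument'' and then do not supply it; as written, the key step is therefore deferred rather than proved.

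The paper closes exactly this gap with a one-line observation that your approach misses: every character occurring here is an integral power of $\nu$, hence trivial on the maximal compact subgroup $K$, so each induced module in sight has a one-dimensional space of $K$-fixed vectors. The maps being $G$-equivariant, it suffices to check that each factor of the composite is non-zero on the spherical line, which is immediate (the embeddings send the spherical vector of a character to a $K$-fixed matrix coefficient, and the rank-one quotient maps pair the spherical vector against a positive integral over $K$). If you replace your composition-series bookkeeping with this spherical-vector check, your argument becomes complete and coincides with the paper's; alternatively, you would need to actually prove the archimedean analogues of the Zelevinsky facts you invoke, which is far more work than the statement warrants.
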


\begin{proposition}
\label{exc1}Fix an integer $k>0$ and let $1$, $\delta$, $\varsigma$ denote the
trivial, $\det$, and $sgn\left(  \det\right)  $ characters of $GL_{k}\left(
\mathbb{R}\right)  ;$ then for all integers $i>0$ we have
\[
1\times\delta^{i}\varsigma\dashrightarrow\delta^{i}\times\varsigma\text{.}%
\]

\end{proposition}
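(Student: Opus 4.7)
My plan is to construct the required intertwiner explicitly as a Capelli-type constant-coefficient differential operator of order $ki$ on the middle Grassmannian $G_{2k}/P_{k,k}$ --- precisely the discrete-family intertwiner alluded to in the introduction that exists only over archimedean $F$ and only for the middle parabolic. I parametrize the open Bruhat cell by $Y \in \mathrm{Mat}_{k,k}(\mathbb{R})$ via $Y \mapsto \mathrm{span}\binom{I_k}{Y}$; restriction to this cell identifies sections of $I(P, \chi_1 \otimes \chi_2)$ with smooth functions $F(Y)$, and a direct computation shows that the Levi factor $L = \diag(L_1, L_2)$ acts by
\[
(L \cdot F)(Y) = \chi_1(L_1)\, \chi_2(L_2)\, \Delta_P^{1/2}(L)\, F(L_2^{-1} Y L_1),
\]
while the opposite unipotent radical acts by translations in $Y$.

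Next, I define the Capelli operator $D := \det(\partial/\partial Y_{ij})^{i}$. Since the $\partial/\partial Y_{ij}$ commute, this determinant is well-defined as a scalar differential operator, and an elementary chain-rule calculation using $\mathcal{D}_Y = A^T \mathcal{D}_Z B^T$ for $Z = AYB$ yields
\[
D\bigl(F(AYB)\bigr) = (\det A)^{i} (\det B)^{i}\, (DF)(AYB)
\]
for any scalar matrices $A, B$. Applied with $A = L_2^{-1}$ and $B = L_1$, this introduces an extra factor $(\det L_1)^{i} (\det L_2)^{-i}$. Since $1 \cdot \det^{i} = \delta^{i}$ and $(\delta^{i} \varsigma) \cdot \det^{-i} = \varsigma$, this is exactly the shift matching $1 \otimes \delta^{i}\varsigma$ to $\delta^{i} \otimes \varsigma$. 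Hence $D$ intertwines the two representations on the open Bruhat cell, and commutes with the opposite unipotent radical.

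The main obstacle is upgrading this local construction to a genuine global intertwiner on the compact Grassmannian. One must verify that $D$ maps globally defined smooth sections to globally defined smooth sections --- or equivalently, check equivariance under a single Weyl element lying outside the opposite parabolic, which is precisely where the classical Capelli identity enters and where Sahi, Kostant--Sahi, and others have constructed such differential intertwiners on unitarizable branches of degenerate principal series. Once this extension is in hand, nonvanishing is immediate: if $f$ is any smooth section with support in a small ball inside the Bruhat cell, then $Df$ is a section with support in the same ball, and for a generic choice of $f$ the constant-coefficient differential operator $D$ does not annihilate it. This completes the proof that $1 \times \delta^{i} \varsigma \dashrightarrow \delta^{i} \times \varsigma$.
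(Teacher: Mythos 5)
Your construction is the same one the paper uses: the constant--coefficient operator $\det(\partial/\partial Y_{ij})^{i}$ on the abelian nilradical of $P_{k,k}$, together with the chain-rule computation showing that it shifts the Levi character from $1\otimes\delta^{i}\varsigma$ to $\delta^{i}\otimes\varsigma$ and commutes with the translations of $\bar N$. But you have correctly identified, and then not closed, the one nontrivial step: passing from an operator that intertwines on the open cell to a globally defined map between the induced representations. Writing ``this is precisely where the classical Capelli identity enters'' and ``once this extension is in hand'' leaves the essential point unproved --- the local equivariance computation is routine, and the global statement is the entire content of the proposition.

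The paper closes exactly this gap by invoking the Kashiwara--Vergne criterion (its Proposition \ref{prop:KV}, quoting \cite[Proposition 2.3]{KV}): if $\mathfrak{n}$ is abelian, if $u\in\mathcal{U}(\bar{\mathfrak{n}})$ transforms under the adjoint action of $L$ by $\chi\eta^{-1}$, \emph{and} if the product $\chi\eta$ extends to a character of all of $G$, then $L_{u}$ maps $C^{\infty}(G,P,\chi)$ into $C^{\infty}(G,P,\eta)$ globally. The third hypothesis is the structural fact your argument never touches: here $\chi\eta=\delta^{i}\otimes\delta^{i}$ extends to the character $\delta^{i}$ of $GL_{2k}(\mathbb{R})$, and this is precisely what guarantees the equivariance under the extra Weyl element you left unchecked. (It also explains why the phenomenon is confined to the middle Grassmannian: only for $p_{1}=p_{2}$ does $\bar{\mathfrak{n}}$ carry a relative invariant such as $\det^{i}$ with the required transformation law.) To complete your proof you should either verify the Weyl-element equivariance directly or, as the paper does, check the three hypotheses of the Kashiwara--Vergne criterion. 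Your non-vanishing argument at the end is fine as stated.
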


\begin{proposition}
\label{exc2}Fix an integer $k>0$ and let $1$, $\delta$, $\bar{\delta}$ denote
the trivial, $\det$, and $\overline{\det}$ characters of $GL_{k}\left(
\mathbb{C}\right)  ,$ then for all integers $i>0$ and all integers $j$ we have%
\[
1\times\delta^{i}\bar{\delta}^{j}\dashrightarrow\delta^{i}\times\bar{\delta
}^{j},1\times\bar{\delta}^{i}\delta^{j}\dashrightarrow\bar{\delta}^{i}%
\times\delta^{j}\text{ .}%
\]

\end{proposition}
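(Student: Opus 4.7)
The two intertwinings claimed are exchanged by the conjugation automorphism $\delta\leftrightarrow\bar\delta$ of $G_{2k}=GL_{2k}(\C)$, so it suffices to construct the first. The target $\delta^{i}\times\bar\delta^{j}$ differs from the source $1\times\delta^{i}\bar\delta^{j}$ only in that the \emph{holomorphic} factor $\delta^{i}$ has been transferred from the second $GL_{k}$ block of the Levi to the first, while the anti-holomorphic factor $\bar\delta^{j}$ is inert and plays a purely passive role, which is why $j$ may be an arbitrary integer. This shift is precisely the one produced by a holomorphic Capelli-type differential operator of order $ki$, and the argument parallels the real case of Proposition~\ref{exc1}.

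On the open Bruhat cell of $G_{2k}/P_{k,k}$, parametrized by $Y\in Mat_{k}(\C)$ via $n^{-}(Y)=\bigl(\begin{smallmatrix}I_{k}&0\\ Y&I_{k}\end{smallmatrix}\bigr)$, restriction identifies smooth sections of $\chi_{1}\times\chi_{2}$ with smooth functions $\phi$ of $Y$ and $\bar Y$, and the Levi element $\mathrm{diag}(a_{1},a_{2})$ acts on $Y$ by $Y\mapsto a_{2}^{-1}Ya_{1}$ together with a multiplicative twist by $\chi_{1}(a_{1})\chi_{2}(a_{2})\Delta_{P}^{1/2}$. I would then introduce the constant-coefficient holomorphic differential operator
\[
D_{i}:=\det\!\bigl(\partial/\partial Y\bigr)^{i},
\]
of order $ki$. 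A direct chain-rule computation shows that under the Levi translation $\phi(Y)\mapsto \phi(a_{2}^{-1}Ya_{1})$ the operator $D_{i}$ is a semi-invariant of weight $\det(a_{1})^{i}\det(a_{2})^{-i}$, so it carries sections of weight $1\otimes\delta^{i}\bar\delta^{j}$ to sections of weight $\delta^{i}\otimes\bar\delta^{j}$; since $D_{i}$ has constant coefficients it also commutes with the translation action of the unipotent radical of $P$, so on the big cell $D_{i}$ is $P$-equivariant.

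The main obstacle is to upgrade this $P$-equivariance to full $G_{2k}$-equivariance, possibly after modifying $D_{i}$ by lower-order corrections as in the classical Capelli construction. I would argue infinitesimally: it suffices to check that the operator commutes with the action of the opposite unipotent radical $\mathfrak n^{-}\subset\mathfrak g_{2k}$ on sections of the prescribed weight. In the $Y$-coordinates the generators of $\mathfrak n^{-}$ act as first-order vector fields whose coefficients are quadratic polynomials in the entries of $Y$, and the commutator $[D_{i},X]$ for $X\in\mathfrak n^{-}$ can be expanded via the Leibniz rule; the Capelli identity, iterated $i$ times, then ensures that after an explicit adjustment by lower-order terms this commutator vanishes on sections of the given weight. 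The resulting $\mathfrak g_{2k}$-equivariant operator on the dense big cell extends uniquely to a $G_{2k}$-intertwiner on the whole Grassmannian. The second claimed intertwiner $1\times\bar\delta^{i}\delta^{j}\dashrightarrow\bar\delta^{i}\times\delta^{j}$ is proved by the identical argument using the anti-holomorphic Capelli operator $\overline{D}_{i}:=\det(\partial/\partial\bar Y)^{i}$.
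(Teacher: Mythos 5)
Your operator is the right one — the same $u=\det^{i}$ (equivalently $\det(\partial)^{i}$ on the big cell) that the paper uses, and your use of the $\delta\leftrightarrow\bar\delta$ conjugation to deduce the second intertwiner from the first is fine. But your argument for why this operator is actually a $G$-intertwiner has a gap, and it is precisely the step the paper handles cleanly by citing Kashiwara--Vergne via Proposition~\ref{prop:KV}.

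The paper's proof (mirroring that of Proposition~\ref{exc1}) verifies the three hypotheses of Proposition~\ref{prop:KV}: (a) $\mathfrak{n}\approx Mat_{k\times k}(\C)$ is abelian; (b) $u=\det^{i}$ transforms under the Levi $L=G_{k}\times G_{k}$ by $\chi\eta^{-1}=\delta^{-i}\otimes\delta^{i}$; and, crucially, (c) the product $\chi\eta=\delta^{i}\bar\delta^{j}\otimes\delta^{i}\bar\delta^{j}$ extends to the character $\delta^{i}\bar\delta^{j}$ of $G_{2k}$. The conclusion of \cite{KV} is then that $L_{u}$ itself, with \emph{no} lower-order correction terms, maps $C^{\infty}(G,P,\chi)\to C^{\infty}(G,P,\eta)$. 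Your proposal never isolates condition (c), which is the real constraint on the characters and the reason the construction works for arbitrary $j$ but only on this specific pair of weights; without identifying it you have no mechanism for the commutator $[D_{i},X]$, $X\in\bar{\mathfrak n}$, to vanish. Instead you hedge that the Capelli identity ``iterated $i$ times'' will produce ``an explicit adjustment by lower-order terms'' making the commutator vanish. That claim is both unsupported and, in this setting, wrong: adding lower-order terms to $D_{i}$ would change its Levi semi-invariance type and push the image out of $C^{\infty}(G,P,\eta)$; the correct statement is that the constant-coefficient operator is already equivariant, and this is what condition (c) guarantees. So the proposal as written does not close — you would need to either invoke the Kashiwara--Vergne covariance result (Proposition~\ref{prop:KV}) as the paper does, or carry out the $\bar{\mathfrak n}$-commutator computation honestly and observe that it vanishes exactly when $\chi\eta$ extends to $G$, with no correction terms.
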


We can get additional instances of $\pi_{1}\dashrightarrow\pi_{2}$ by
considering central twists. To formulate this precisely we introduce the
following notation.

\begin{notation}
\label{X} Given non-negative integers $p_{1}+p_{2}=p_{3}+p_{4}=n$ and
characters $\chi_{i}$ of $G_{p_{i}}$ we write $\mathfrak{X}=\left(  \chi
_{1},\chi_{2},\chi_{3},\chi_{4}\right)  $,
\[
\text{ }H\left(  \mathfrak{X}\right)  =Hom_{G_{n}}\left(  \chi_{1}\times
\chi_{2},\chi_{3}\times\chi_{4}\right)
\]

\end{notation}

We define the\emph{ central twist} of $\mathfrak{X}$ by a character $\psi$ of
$F^{\times}$ to be%
\[
\psi\mathfrak{X=}\left(  \psi\chi_{1},\ldots,\psi\chi_{4}\right)  \text{ with
}\left(  \psi\chi_{i}\right)  \left(  g\right)  =\psi\left(  \det g\right)
\chi_{i}\left(  g\right)  .
\]
It is easy to see that for all $\psi$ we have a natural isomorphism $H\left(
\mathfrak{X}\right)  \approx H\left(  \psi\mathfrak{X}\right)  $.

\begin{definition}
\label{SME}We refer to the $\mathfrak{X}$ obtained by central twists from
Propositions \ref{std} (resp. \ref{mix1},\ref{mix2} ) (resp. \ref{exc1},
\ref{exc2}) as \emph{standard} (resp. \emph{mixed)} (resp.\emph{ exceptional}).
\end{definition}

Our main result is as follows.

\begin{theorem}
\label{thm:main} $\dim H\left(  \mathfrak{X}\right)  \leq1$ with equality iff
$\mathfrak{X}$ is standard, mixed, or exceptional.
\end{theorem}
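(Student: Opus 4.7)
The ``if'' direction follows from Propositions \ref{std}--\ref{exc2} combined with the observation, already recorded after Notation \ref{X}, that central twisting gives a natural isomorphism $H(\mathfrak{X}) \cong H(\psi \mathfrak{X})$. My plan is therefore to prove the upper bound $\dim H(\mathfrak{X}) \leq 1$ and to show that equality forces $\mathfrak{X}$ to be standard, mixed, or exceptional.

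The natural tool is a Bruhat (double-coset) analysis. For the pair of maximal parabolics $P_1 = P_{p_1,p_2}$ and $P_2 = P_{p_3,p_4}$ in $G_n$, the double coset space $P_2 \backslash G_n / P_1$ is totally ordered by closure and is parametrized by the integer $r = \dim(V_1 \cap V_2)$, taking values in $\max(0, p_1+p_3-n) \leq r \leq \min(p_1,p_3)$; the open orbit corresponds to generic position and the closed one to maximal intersection. Using the associated Bruhat filtration of $\pi_1 = \chi_1 \times \chi_2$ and the left exactness of $\mathrm{Hom}_{G_n}(-, \pi_2)$, one obtains
\[
\dim H(\mathfrak{X}) \leq \sum_{r} d_r,
\]
where $d_r$ counts the $(P_2 \times P_1)$-equivariant distributions on the orbit $O_r$ (together with their transverse jets, over the archimedean fields) whose equivariance type is prescribed by the characters $\chi_i$ and the modular functions $\Delta_{P_i}^{1/2}$.

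Each $d_r$ reduces to a linear system of character equations obtained by computing the stabilizer in $P_2 \times P_1$ of a base point of $O_r$ together with its action on the conormal bundle. A case-by-case analysis should show that $d_r \leq 1$ for every $r$ and that the resulting families of solvable systems match the three classes exactly: the open orbit contributes the standard intertwiners, the intermediate orbits the mixed ones, and the higher-order transverse jets along the smaller orbits (which exist only when $F = \mathbb{R}$ or $\mathbb{C}$) the Capelli-type exceptional ones. A combinatorial bookkeeping step then verifies that for any fixed $\mathfrak{X}$ at most one of these systems can be satisfied, so that $\dim H(\mathfrak{X}) \leq 1$ and the equality case is precisely the union of the three families.

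I expect the principal obstacle to be the archimedean transverse-jet analysis. One must enumerate all conormal characters along each non-open orbit and verify that the only jet levels admitting an equivariant distribution are precisely those realizing the Capelli operators of Propositions \ref{exc1} and \ref{exc2}. This is particularly delicate over $\mathbb{C}$, where the additional dimension of the character group of $F^{\times}$ leaves more room for ``accidental'' solutions to the character equations that must be ruled out.
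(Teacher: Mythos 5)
Your ``if'' direction is fine, but the Bruhat/Mackey strategy you propose for the upper bound and the classification, while a legitimate classical approach, contains a gap that is fatal as stated. The inequality $\dim H(\mathfrak{X})\leq\sum_{r}d_{r}$ is correct, but the key claim that ``for any fixed $\mathfrak{X}$ at most one of these systems can be satisfied'' is false. Take $p_{1}=p_{2}=p_{3}=p_{4}=k$ and $\chi_{1}=\chi_{2}=\chi_{3}=\chi_{4}=\chi$, a standard $\mathfrak{X}$ with $\dim H(\mathfrak{X})=1$ (here $\chi\times\chi$ is irreducible). The closed orbit ($V_{1}=V_{2}$) carries the delta distribution, whose equivariance is exactly Lemma \ref{lem: delta1} and which yields the identity operator; the open orbit (transverse pairs) also satisfies its character equation, since the formal Knapp--Stein density $f\mapsto\int_{\bar{N}}f(w\bar{n})\,d\bar{n}$ is equivariant for the pair $(\chi_{1}\times\chi_{2},\chi_{2}\times\chi_{1})$, which coincides with $(\chi\times\chi,\chi\times\chi)$ when $\chi_{1}=\chi_{2}$. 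Thus $\sum_{r}d_{r}\geq2$ while the true dimension is $1$: the open-orbit distribution simply fails to extend equivariantly across the boundary. So your method cannot close without an analysis of these extension obstructions (meromorphic continuation, poles whose residues are supported on smaller orbits), which is precisely the hard analytic content the sketch omits; a similar multiple-orbit phenomenon occurs in the mixed cases, where the rank-one kernels are smooth. In addition, the archimedean transverse-jet analysis that you yourself flag as the principal obstacle is where the exceptional operators live, and one must rule out equivariant contributions at infinitely many jet levels on every non-open orbit; none of this case analysis is carried out, so the proposal is a plan rather than a proof.

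For comparison, the paper avoids orbit-by-orbit distribution theory entirely. It proves the bound and the classification simultaneously by induction on $n$ using the Bernstein--Zelevinsky derivative functor $\Phi$: either $H_{0}(\mathfrak{X})\neq0$, in which case every intertwiner factors through a finite-dimensional representation which is the unique irreducible quotient of $\chi_{1}\times\chi_{2}$ and the unique irreducible submodule of $\chi_{3}\times\chi_{4}$ (Proposition \ref{prop:Uniq}), and $\mathfrak{X}$ is pinned down by Lemmas \ref{lem:FinSubFrob} and \ref{lem:Uniq}; or $H_{0}(\mathfrak{X})=0$ and Corollary \ref{cor:X'} embeds $H(\mathfrak{X})$ into $H(\mathfrak{X}^{\prime})$, lowering $n$ by $2$, after which the central-character constraint (Corollary \ref{cor:one}) and Lemma \ref{lem:plus} recover $\mathfrak{X}$ from $\mathfrak{X}^{\prime}$. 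This argument is uniform over all local fields and sidesteps both the extension obstructions and the jet analysis. If you wish to pursue the Bruhat route, you must add the boundary-extension analysis described above and carry out the full enumeration of conormal characters.
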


%We also give reducibility criterion for $\chi_{1}\times\chi_{2}$ and show that
%its composition series is monotone in a certain sense. In the archimedean case
%these statements were proven by Howe and Lee in \cite{HL}, using $K$-type
%analysis. In the non-archimedean case the statements can be deduced from the
%Bernstein-Zelevinsky classification \cite{Zel}. However, the methods of the
%current paper allow a short, uniform and self-contained proof.

The intertwiners in the standard case are either scalar operators or
Knapp-Stein operators (cosine transforms). In the mixed case the intertwiners
are rank 1 operators in Proposition \ref{mix1}, and Radon transforms in
Proposition \ref{mix2}. The intertwiners in the exceptional case in
Propositions \ref{exc1} and \ref{exc2} are given by explicit differential
operators. In the real case they factor through a Speh representation and in
the complex case either their domain or their range are irreducible.

In \S \ref{sec:prel} we give some preliminaries on induced representations of
reductive groups. In \S \ref{sec:der} we introduce a key tool: the
Bernstein-Zelevinsky theory of derivatives. This tool is specific for $G_{n}$,
but works uniformly over all fields. In \S \ref{sec:const} we construct the
intertwining operators. In \S \ref{sec:Pf} we finish the proof of Theorem
\ref{thm:main}. The proof is carried out by induction, using the theory of
derivatives and results on finite-dimensional subquotients.

\subsection{Acknowledgements}

We cordially thank Semyon Alesker for posing this question to us (for
$F=\mathbb{R}$) and for useful discussions.

%for the Archimedean case.
%In \S \ref{sec:padic} we explain how sections \ref{sec:prel}-\ref{sec:Pf} adapt to the non-archimedean case.

%\subsection{Other local fields and a uniform formulation of Theorem
%\ref{thm:main}}
%Let $F$ be a local field and consider a 4-tuple $\mathfrak{X}$ characters
%$\chi_{i}$ of $GL(n,F)$. The definition of standard $\mathfrak{X}$ stays the
%same, and exceptional $\mathfrak{X}$ occur only for $F=\mathbb{R}$. In order
%to define mixed $\mathfrak{X}$ we will use the terminology of segments
%following \cite{Zel}. Note that $\chi_{i}=\psi_{i} \circ\det$ for some
%multiplicative characters $\psi_{i}$ of $F$. Let $\nu(x):=|x|^{1/2}$ . D
%\begin{theorem}
%\label{thm:UniMain} $\dim H\left(  \mathfrak{X}\right)  \leq1$ with equality
%iff $\mathfrak{X}$ is standard or mixed, or $F=\mathbb{R}$ and $\mathfrak{X}$ exceptional.
%\end{theorem}
%\begin{remark}
%The inductive argument in section \ref{sec:Pf} shows that the exceptional case
%appears for $F$ if and only if it appears for $GL(2,F)$. This explains why it
%appears only for $F=\mathbb{R}$.
%\end{remark}

\section{Preliminaries}

\label{sec:prel}

\subsection{Degenerate principal series}

Let $G$ be a reductive group over an arbitrary local field $F$. In this
section we discuss some basic properties of the induced representation
$I\left(  P,\chi\right)  $ on $C^{\infty}\left(  G,P,\chi\right)  $ as in
(\ref{=CG}). For detailed proofs we refer the reader to \cite{BW,Wal1}
and to other standard texts on representation theory.

Let $\mathcal{E}^{\prime}\left(  G\right)  $ denote the set of compactly
supported distributions on $G$, regarded as a left and right $G$-module as
usual via the pairing $\left\langle \cdot,\cdot\right\rangle :\mathcal{E}%
^{\prime}\left(  G\right)  \times C^{\infty}\left(  G\right)  \rightarrow
\mathbb{C}$.

\begin{lemma}
\label{lem: delta1} Let $\varepsilon\in\mathcal{E}^{\prime}\left(  G\right)  $
denote evaluation at $1\in G$, then we have
\[
\left\langle R_{p^{-1}}\varepsilon,f\right\rangle =\chi\left(  p\right)
\Delta_{P}^{1/2}\left(  p\right)  \left\langle \varepsilon,f\right\rangle
\text{ for all }p\in P,f\in C^{\infty}\left(  G,P,\chi\right)
\]

\end{lemma}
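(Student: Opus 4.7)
The plan is a direct unpacking of the definitions; no deep input is required. The identity compares two expressions for $f(p)$: one coming from the right $G$-action on $\mathcal{E}^{\prime}(G)$, and the other from the left $P$-invariance property that defines $C^{\infty}(G,P,\chi)$.

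First I would use the convention---fixed by the requirement that the pairing $\langle\cdot,\cdot\rangle$ be $G$-equivariant and that $R$ remain a right action---that $\langle R_g T, h\rangle = \langle T, R_{g^{-1}} h\rangle$ for $T\in\mathcal{E}^{\prime}(G)$ and $h\in C^{\infty}(G)$. Applying this with $g = p^{-1}$, $T = \varepsilon$, $h = f$, and using that $\varepsilon$ is evaluation at $1$, one obtains
\[
\langle R_{p^{-1}}\varepsilon,\, f\rangle \;=\; \langle\varepsilon,\, R_p f\rangle \;=\; (R_p f)(1) \;=\; f(p).
\]

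Second I would invoke the defining relation (\ref{=CG}): $L_{p^{-1}}f = \chi(p)\Delta_P^{1/2}(p)\,f$. Unwinding $L_{p^{-1}}f(x) = f(px)$, this reads $f(px) = \chi(p)\Delta_P^{1/2}(p)\, f(x)$ for all $x\in G$. Specializing to $x=1$ yields
\[
f(p) \;=\; \chi(p)\,\Delta_P^{1/2}(p)\, f(1) \;=\; \chi(p)\,\Delta_P^{1/2}(p)\,\langle\varepsilon,\, f\rangle,
\]
and combining this with the previous display proves the lemma.

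There is no substantive obstacle; the only care required is the sign convention when transferring the action across the pairing---one must use $R_{p^{-1}}$ (not $R_p$) on the function side, so that $R$ descends to a genuine right action on $\mathcal{E}^{\prime}(G)$---and the elementary identification of $L_{p^{-1}}$ as ``left multiplication of the argument by $p$,'' so that the $(L,\chi\Delta_P^{1/2})$-equivariance of $f$ can be evaluated at the identity.
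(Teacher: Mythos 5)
Your proof is correct and is exactly the paper's argument spelled out in detail: the paper's entire proof is ``Indeed both sides are equal to $f\left(  p\right)  $,'' and your two computations verify precisely that, with the same (and correct) convention for the dual action on $\mathcal{E}^{\prime}\left(  G\right)  $.
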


\begin{proof}
Indeed both sides are equal to $f\left(  p\right)  $.
\end{proof}

\begin{lemma}
\label{lem: duality} The representations $I\left(  P,\chi\right)  $ and
$I\left(  P,\chi^{-1}\right)  $ are contragredient.
\end{lemma}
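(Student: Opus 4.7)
The plan is to construct an explicit $G$-invariant bilinear pairing
$$B : I(P,\chi) \times I(P,\chi^{-1}) \to \mathbb{C}, \qquad B(f,h) := \int_{G/P} fh,$$
and then verify that $B$ is non-degenerate. This identifies each representation with the (smooth) contragredient of the other.

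First I would check that the integrand makes sense as a density on $G/P$. Given $f\in C^{\infty}(G,P,\chi)$ and $h\in C^{\infty}(G,P,\chi^{-1})$, the defining transformation laws combine to give
$$L_{p^{-1}}(fh)(x) = \chi(p)\Delta_P^{1/2}(p)\cdot\chi^{-1}(p)\Delta_P^{1/2}(p)\cdot f(x)h(x) = \Delta_P(p)\,f(x)h(x),$$
which is precisely the transformation law satisfied by sections of the density bundle on $G/P$. Since $P$ is parabolic, $G/P$ is a compact flag variety over $F$, so the integral converges absolutely and $B$ is a well-defined complex number. Right translation on $G$ descends to $G/P$ and preserves the canonical density structure, from which $B(R_g f,R_g h)=B(f,h)$ is immediate; so $B$ is $G$-invariant.

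For non-degeneracy, given $0\neq f\in C^{\infty}(G,P,\chi)$, choose $x_0\in G$ with $f(x_0)\neq 0$. Using a bump function $\phi$ on $G$ supported in a small neighborhood of $x_0$ and a standard averaging-over-$P$ construction, I would produce an $h\in C^{\infty}(G,P,\chi^{-1})$ whose image-support in $G/P$ is a small neighborhood of $x_0 P$, chosen so that $fh$ is a sign-definite density on that neighborhood and vanishes outside. Then $B(f,h)\neq 0$; the symmetric argument starting from a nonzero $h\in C^{\infty}(G,P,\chi^{-1})$ handles the other side, so $B$ is a perfect pairing. In the archimedean setting, continuity of $B$ in each variable with respect to the \Fre topology is a separate (but routine) verification that then allows one to conclude the \Fre contragredient identification.

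The main technical point is the density interpretation of $fh$, since the conclusion depends on the modular character $\Delta_P$ being exactly the Jacobian of the $P$-action on the tangent space at the identity coset of $G/P$; this is standard, but the normalization conventions for $\Delta_P$ and for $\Delta_P^{1/2}$ in the definition of $I(P,\chi)$ have to be reconciled with care. Once this identification is set up, $G$-invariance is formal and the bump-function construction for non-degeneracy relies only on compactness of $G/P$ and the existence of smooth (resp.\ locally constant) partitions of unity.
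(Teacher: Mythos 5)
Your argument is correct in outline, but it is genuinely different from what the paper does: the paper offers no proof at all for this lemma, simply citing \cite[V.5.2.4]{Wal1}, whereas you reconstruct the standard duality pairing from scratch. Your construction is the classical one and the computation of the transformation law of $fh$ is exactly right (note only that with the convention $L_{p^{-1}}f=\chi(p)\Delta_P^{1/2}(p)f$ the functions are \emph{left} $P$-equivariant, so the density lives on $P\backslash G$ rather than $G/P$; this is cosmetic). Two points deserve more care than your sketch gives them. First, the averaging construction for non-degeneracy should be stated as the standard surjection $C_c^\infty(G)\onto C^\infty(G,P,\chi^{-1})$, $\phi\mapsto\int_P\chi^{-1}(p)\Delta_P^{-1/2}(p)\phi(px)\,dp$, applied to $\phi=\bar f\cdot\psi$ for a bump $\psi$ near $x_0$, so that $fh$ is approximately $|f|^2\psi$ there; since the pairing is bilinear rather than sesquilinear, "sign-definite" needs this explicit choice. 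Second, and more substantively, a non-degenerate invariant pairing only identifies each representation with a subrepresentation of the (smooth, resp. \Fre) dual of the other; to conclude that $I(P,\chi^{-1})$ is the \emph{full} contragredient of $I(P,\chi)$ you must invoke admissibility (comparing dimensions of $K$-isotypic components on both sides), and in the archimedean case you must work in the Casselman--Wallach category, which is precisely what the cited result of Wallach packages. With those two points supplied, your proof is complete and has the advantage of being self-contained and uniform over all local fields.
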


\begin{proof}
This is proved in \cite[V.5.2.4]{Wal1}.
\end{proof}

Let $\bar{P}$ denote the parabolic subgroup opposite to $P$. Then the
characters of $P$ and $\bar{P}$ can be identified with those of the common
Levi subgroup $L=P\cap\bar{P}.$

\begin{proposition}
\label{Prop:KnSt}There is a nonzero intertwining operator $I\left(
P,\chi\right)  \rightarrow$ $I\left(  \bar{P},\chi\right)  .$
\end{proposition}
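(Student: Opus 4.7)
The plan is to construct the operator as a Knapp--Stein intertwining integral. Let $\bar N$ denote the unipotent radical of $\bar P$ and $L = P\cap \bar P$ the common Levi. For $f\in C^\infty(G,P,\chi)$ consider
\[
(M_\chi f)(g) := \int_{\bar N} f(\bar n g)\, d\bar n.
\]
A direct change of variables, together with the fact that conjugation by $\ell\in L$ scales $d\bar n$ by $\Delta_{\bar P}(\ell)$, shows that whenever this integral converges absolutely it defines an element of $C^\infty(G,\bar P,\chi)$, and that $f\mapsto M_\chi f$ commutes with the right action of $G$.

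First I would establish absolute convergence for $\chi$ in a suitable open cone in the space of characters of $L$, via standard Iwasawa-decomposition estimates on the restriction of $f$ to $\bar N$ governed by $\chi\Delta_P^{1/2}$. On this cone, $M_\chi$ is already a manifestly nonzero intertwiner. Next I would extend $\chi\mapsto M_\chi$ meromorphically (rationally, in the non-archimedean case) to the whole space of characters. Using the product decomposition $\bar N = \prod_\alpha \bar N_\alpha$ over the positive roots outside $L$, this reduces to an explicit one-variable integral on a copy of $SL_2$ (the classical Gindikin--Karpelevich computation); the argument is carried out uniformly for both archimedean and non-archimedean $F$ in \cite{BW,Wal1}.

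Finally, at an arbitrary $\chi_0$ one obtains a nonzero intertwiner by renormalization: write $M_\chi = c(\chi) T_\chi$ with $c(\chi)$ a meromorphic scalar chosen so that $T_\chi$ is holomorphic and nonvanishing in a neighbourhood of $\chi_0$, and take $T_{\chi_0}$. The main obstacle is precisely this last point: the meromorphic continuation together with the simultaneous cancellation of zeros and poles of $M_\chi$ so that after normalization the operator remains nonzero at the given $\chi_0$. Both steps are technical but entirely standard, and I would refer to \cite{BW,Wal1} for the detailed proofs.
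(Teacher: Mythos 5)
Your proposal is correct and takes essentially the same approach as the paper, whose proof is simply a citation of \cite{KnSt} for the archimedean case and \cite{Wald} for the non-archimedean case. You have spelled out the Knapp--Stein construction underlying those references: the intertwining integral over $\bar N$, absolute convergence in a cone of characters, meromorphic (rational in the $p$-adic case) continuation in $\chi$, and the normalization that cancels zeros and poles so as to yield a nonzero operator at every $\chi$.
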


\begin{proof}
See \cite{KnSt} and \cite{Wald} for the archimedean and non-archimedean cases.
\end{proof}

\subsection{Finite dimensional representations}

Let $\left(  \phi,V\right)  $ be an irreducible finite dimensional
representation of a reductive group $G$. We are interested in the possibility
of realizing $\phi$ as a submodule or quotient of some $I\left(
P,\chi\right)  $, which we denote by $\phi\hookrightarrow I\left(
P,\chi\right)  $ and $I\left(  P,\chi\right)  \twoheadrightarrow\phi$
respectively. We start with two simple results.

\begin{lemma}
\label{highest}We have $\dim V^{P,\chi}\leq1$ for all $\chi$, with equality
for at most one $\chi$.
\end{lemma}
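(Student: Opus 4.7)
The plan is to reduce the question to the theory of the highest weight. Since $\chi$ is a character of $P$, it is trivial on both the unipotent radical $N$ of $P$ and the derived subgroup $[L,L]$ of the Levi $L$; equivalently, $\chi$ factors through the abelianization $P/[P,P]\cong L/[L,L]$. Consequently, any $v\in V^{P,\chi}$ is $N$-invariant, is an $L$-eigenvector with eigencharacter $\chi|_{L}$, and in particular is fixed by $[L,L]$.

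Next I would choose a minimal $F$-parabolic subgroup $B=T_0 U$ of $G$ contained in $P$, and write $U=U_L\cdot N$, where $U_L=U\cap L$ is the unipotent radical of $B\cap L$ in $L$. Since $U_L\subset [L,L]$, the vector $v$ is fixed by $U_L$ as well, and combining this with $N$-invariance gives $v\in V^{U}$. By the theorem of the highest weight for finite-dimensional irreducible representations of a reductive group, $\dim V^U\leq 1$, hence $\dim V^{P,\chi}\leq 1$.

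For the uniqueness of $\chi$, note that $L$ normalizes $U$, so $V^{U}$ is an $L$-stable line. If $V^{U}$ is nonzero, $L$ acts on it via a single character $\lambda$, and $V^{P,\chi}\neq 0$ forces $\chi|_L=\lambda$. Since $\chi$ is trivial on $N$, it is completely determined by its restriction to $L$, so there is at most one $\chi$ for which equality can hold.

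The only delicate point is the claim $\dim V^U\leq 1$, which for general reductive $G$ over a local field requires the appropriate version of highest-weight theory (absolute irreducibility, or the classification of the smooth finite-dimensional irreducibles). For the applications here this is not a real obstacle: when $G=GL_n(F)$ with $F$ non-archimedean, every smooth finite-dimensional irreducible is a character and the lemma is trivial, while for $F=\R$ or $F=\C$ it is a standard consequence of the classical theorem of the highest weight.
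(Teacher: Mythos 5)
Your proof is correct and is essentially the same argument the paper has in mind: the paper simply says the case $\dim V>1$ (which is archimedean) "follows from highest weight theory," and you have unpacked that claim by observing that $V^{P,\chi}\subseteq V^{U}$ via $U=U_L\cdot N$ and $U_L\subset[L,L]$, then invoking $\dim V^U\leq 1$ and the $L$-eigencharacter on $V^U$ to get both the dimension bound and the uniqueness of $\chi$. You also explicitly handle the $\dim V=1$ / non-archimedean case, which the paper dismisses as obvious, so your write-up is a more detailed but structurally identical version of the paper's proof.
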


\begin{proof}
This is obvious if $\dim V=1$, while $\dim V>1$ only occurs in the archimedean
case, where the result follows from highest weight theory.
\end{proof}

Let $\left(  \phi^{\ast},V^{\ast}\right)  $ be the contragredient
representation of $\left(  \phi,V\right)  $.

\begin{lemma}
\label{lem:FinSubFrob} We have $\phi\hookrightarrow I\left(  P,\chi\right)  $
(resp. $I\left(  P,\chi\right)  \twoheadrightarrow\phi$) iff $\left(  V^{\ast
}\right)  ^{P,\chi^{-1}\Delta_{P}^{-1/2}}$ (resp. $V^{P,\chi\Delta_{P}^{-1/2}%
}$) is nonzero. For a given $P$, there is at most one such $\chi$ in each case.
\end{lemma}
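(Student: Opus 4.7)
The plan is to establish the quotient statement (the ``resp.''\ part) by a direct Frobenius-type calculation, and then deduce the subrepresentation statement from it by duality, using Lemma \ref{lem: duality}. Since $\phi$ is irreducible, any nonzero $G$-map $I(P,\chi)\to V$ is automatically surjective, so the quotient half amounts to a natural isomorphism
\[
\Hom_{G}(I(P,\chi),V)\cong V^{P,\chi\Delta_{P}^{-1/2}}.
\]

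For the forward direction of the quotient half, I would take a nonzero $T:I(P,\chi)\to V$, pass via Lemma \ref{lem: duality} to its adjoint $T^{\vee}:V^{\ast}\hookrightarrow I(P,\chi^{-1})$, and evaluate at $1\in G$ to obtain a linear form $w\in V^{\ast\ast}\cong V$. The $G$-equivariance of $T^{\vee}$ (intertwining $\phi^{\ast}$ on $V^{\ast}$ with the right action $R$ on $I(P,\chi^{-1})$) together with the defining relation $f(pg)=\chi^{-1}(p)\Delta_{P}^{1/2}(p)f(g)$ for elements of $I(P,\chi^{-1})$ give, after transposing through the canonical identification $V\cong V^{\ast\ast}$,
\[
\phi(p)w=\chi(p)\Delta_{P}^{-1/2}(p)\,w\qquad\text{for all }p\in P,
\]
so $w\in V^{P,\chi\Delta_{P}^{-1/2}}$. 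The vector $w$ is nonzero whenever $T$ is: if $T^{\vee}(v^{\ast})(1)=0$ for every $v^{\ast}$, then $T^{\vee}(v^{\ast})(g)=T^{\vee}(\phi^{\ast}(g)v^{\ast})(1)=0$ for every $g$, forcing $T^{\vee}=0$ hence $T=0$.

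For the converse I would reverse the construction. Given $w\in V^{P,\chi\Delta_{P}^{-1/2}}$, define the matrix-coefficient map $S:V^{\ast}\to C^{\infty}(G)$ by $S(v^{\ast})(g)=v^{\ast}(\phi(g^{-1})w)$. The eigenvector property of $w$ forces the transformation law of $I(P,\chi^{-1})$ on $S(v^{\ast})$, $S$ is $G$-equivariant by construction, and nonvanishing of $w$ yields nonvanishing of $S$; dualizing produces the desired $T:I(P,\chi)\to V$.

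For the submodule half, a nonzero embedding $\phi\into I(P,\chi)$ dualizes via Lemma \ref{lem: duality} to a nonzero quotient $I(P,\chi^{-1})\onto\phi^{\ast}$, and vice versa. Applying the quotient half with $(V,\chi)$ replaced by $(V^{\ast},\chi^{-1})$ gives the stated criterion $(V^{\ast})^{P,\chi^{-1}\Delta_{P}^{-1/2}}\neq 0$. Uniqueness of $\chi$ in either case is immediate from Lemma \ref{highest}, since that lemma asserts that $V$ (resp.\ $V^{\ast}$) has a nonzero $(P,\lambda)$-eigenspace for at most one character $\lambda$ of $P$. The main obstacle is purely bookkeeping: tracking whether $R$ or $L$ acts on the contragredient and pinning down the sign on $\Delta_{P}^{1/2}$; beyond that the argument is routine adjoint nonsense.
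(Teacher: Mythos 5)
Your proposal is correct and is essentially the paper's own argument: the core mechanism in both is evaluation at $1\in G$ (Lemma \ref{lem: delta1}) producing the $(P,\lambda)$-eigenvector, matrix coefficients for the converse, Lemma \ref{lem: duality} to pass between the submodule and quotient statements, and Lemma \ref{highest} for uniqueness of $\chi$. The only (immaterial) difference is that the paper proves the submodule half directly and deduces the quotient half by duality, whereas you dualize first and argue from the quotient side.
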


\begin{proof}
If $\phi\hookrightarrow I\left(  P,\chi\right)  $ then by Lemma
\ref{lem: delta1} the restriction $\varepsilon|_{V}$ gives an element in
$\left(  V^{\ast}\right)  ^{P,\chi^{-1}\Delta_{P}^{-1/2}}$, easily seen to be
nonzero. Conversely the matrix coefficient with respect to such an element
provides an imbedding $\phi\hookrightarrow I\left(  P,\chi\right)  $. Next by
Lemma \ref{lem: duality}, we see that
\[
I\left(  P,\chi^{-1}\right)  \twoheadrightarrow\phi^{\ast}\iff\phi
\hookrightarrow I\left(  P,\chi\right)  \iff\left(  V^{\ast}\right)
^{P,\chi^{-1}\Delta_{P}^{-1/2}}\neq0.
\]
Replacing $\phi$ by $\phi^{\ast}$ and $\chi$ by $\chi^{-1}$, we deduce
$I\left(  P,\chi\right)  \twoheadrightarrow\phi\iff$ $V^{P,\chi\Delta
_{P}^{-1/2}}\neq0$.

The second part of the Lemma follows from Lemma \ref{highest}.
\end{proof}

\begin{proposition}
\label{prop:Uniq} If $\phi\hookrightarrow I\left(  P,\chi\right)  $ (resp.
$I\left(  P,\chi\right)  \twoheadrightarrow\phi$) then $\phi$ is the unique
irreducible submodule (resp. quotient) of $I\left(  P,\chi\right)  $.
\end{proposition}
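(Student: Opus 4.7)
The two assertions of the proposition are contragredient to each other under Lemma \ref{lem: duality}: passing to contragredients interchanges $I(P,\chi)$ with $I(P,\chi^{-1})$ and submodules with quotients. Hence it suffices to prove the submodule version, which I will do by showing that any irreducible finite-dimensional submodule $\phi' \into I(P,\chi)$ coincides with $\phi$ as a subspace.

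The first step shows that embeddings of $\phi$ are essentially unique. By the argument in the proof of Lemma \ref{lem:FinSubFrob}, the map $T \mapsto \varepsilon \circ T$ identifies $\Hom_G(\phi, I(P,\chi))$ with $(V^*)^{P,\chi^{-1}\Delta_P^{-1/2}}$, which has dimension at most $1$ by Lemma \ref{highest}. Therefore any two nonzero embeddings $\phi \into I(P,\chi)$ are scalar multiples of each other and so have the same image as a subspace of $I(P,\chi)$.

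The second step shows $\phi \cong \phi'$. Applying Lemma \ref{lem:FinSubFrob} to $\phi'$ yields a nonzero $P$-semi-invariant functional on $\phi'$ transforming by $\chi^{-1}\Delta_P^{-1/2}$. By the ``at most one $\chi$'' clause of Lemma \ref{highest}, this character is the unique $P$-semi-invariant character both of $V^*$ and of the dual of $\phi'$. Standard structure theory of reductive groups now forces $\phi\cong\phi'$: in the archimedean case the $P$-semi-invariant character of $V^*$ recovers the highest weight of $\phi$, and in the non-archimedean case smooth finite-dimensional irreducibles are one-dimensional characters, which are determined by their restrictions to $P$ since $P$ surjects onto the abelianization of $G$. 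Combining with the first step, the embedding $\phi' \into I(P,\chi)$ agrees --- up to the isomorphism $\phi'\cong\phi$ and a scalar --- with the given embedding of $\phi$, so $\phi' = \phi$ as submodules.

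The main obstacle is precisely this last structural step: recovering the isomorphism class of the irreducible representation from a single $P$-semi-invariant character goes beyond the abstract reciprocity machinery of Lemmas \ref{highest} and \ref{lem:FinSubFrob}, and is where the structure theory of reductive groups genuinely enters.
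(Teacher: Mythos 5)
There is a genuine gap: you only rule out other \emph{finite-dimensional} irreducible submodules, whereas the proposition asserts that $\phi$ is the unique irreducible submodule of $I(P,\chi)$ among \emph{all} irreducible submodules, finite-dimensional or not. This stronger statement is exactly what is used later (in Proposition \ref{prop:Unique}, the image of an arbitrary intertwiner is a nonzero submodule of $\chi_3\times\chi_4$, and one needs to know that the irreducible submodule it contains is $\phi$; nothing forces that submodule to be finite-dimensional a priori). Your entire mechanism --- Frobenius reciprocity via $\varepsilon|_V$ and the one-dimensionality of $(V^*)^{P,\chi^{-1}\Delta_P^{-1/2}}$ from Lemma \ref{highest} --- lives in the world of finite-dimensional representations and says nothing about a hypothetical infinite-dimensional irreducible submodule sitting elsewhere in $I(P,\chi)$. (Your first step, that $\Hom_G(\phi,I(P,\chi))$ is at most one-dimensional so that all copies of $\phi$ coincide, is correct and is a nice observation, but it is not the content of the proposition.)

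The paper closes this gap by a different and essentially unavoidable input: the Langlands classification. Since $\phi$ is finite-dimensional, its embedding into $I(P,\chi)$ extends to an embedding into a minimal-parabolic principal series $I(P_0,\chi_0)\supset I(P,\chi)$, where $\phi$ is realized as the Langlands submodule; the Langlands subrepresentation theorem then says $\phi$ is the unique irreducible submodule of $I(P_0,\chi_0)$, hence of the smaller module $I(P,\chi)$. Some global structural fact of this kind (Langlands data, or an infinitesimal/cuspidal-support argument) is needed to exclude unrelated irreducible constituents from occurring as submodules; the reciprocity lemmas alone cannot do it. Your reduction of the quotient case to the submodule case via Lemma \ref{lem: duality} matches the paper and is fine.
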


\begin{proof}
By Lemma \ref{lem: duality} it suffices to deal with that case $\phi
\hookrightarrow I\left(  P,\chi\right)  $. If $P$ is minimal, then the result
follows from the Langlands classification (\cite[Ch IV and Ch XI]{BW}), once
we note that $\phi$ is a Langlands submodule of $I\left(  P,\chi\right)  $.
Otherwise choose a minimal parabolic $P_{0}\subset P$. Then we have
\[
\phi\hookrightarrow I\left(  P,\chi\right)  \subset I\left(  P_{0},\chi
_{0}\right)  \text{ with }\chi_{0}=\Delta_{P_{0}}^{-1/2}\left(  \chi\Delta
_{P}^{1/2}\right)  |_{P_{0}};
\]
but $\phi$ is the unique submodule of $I\left(  P_{0},\chi_{0}\right)  $,
hence also of $I\left(  P,\chi\right)  $.
\end{proof}

Fix a minimal parabolic subgroup $P_{0}\subset G$ and let $\mathcal{M}$ be the
set of pairs $\left(  P,\chi\right)  $ such that $P$ is a \emph{maximal}
parabolic containing $P_{0}$ and $\chi$ is a character of $P$.

\begin{lemma}
\label{lem:Uniq} If $\dim V>1$ then $\phi\hookrightarrow I\left(
P,\chi\right)  $ (resp. $I\left(  P,\chi\right)  \twoheadrightarrow\phi$) for
at most one $\left(  P,\chi\right)  \in\mathcal{M}$.
\end{lemma}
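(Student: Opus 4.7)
The plan is to reduce to the submodule case via duality, then to a statement about $N$-invariants via Frobenius reciprocity, and finally use highest weight theory to derive a contradiction from the assumption of two distinct maximal parabolics.

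First, by Lemma \ref{lem: duality} applied to $\phi^{\ast}$ and $\chi^{-1}$, the quotient case follows from the submodule case, so it suffices to treat embeddings $\phi\into I(P,\chi)$. By Lemma \ref{lem:FinSubFrob} and Lemma \ref{highest}, for each fixed $P\in\mathcal{M}$ at most one character $\chi$ can occur, so the task reduces to showing: for at most one $P\in\mathcal{M}$ does there exist any character $\eta$ with $(V^{\ast})^{P,\eta}\neq 0$.

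Since $\dim V>1$ places us in the archimedean case, highest weight theory is available. Fix a maximal torus $T\subset P_{0}$ with simple roots $\Delta$, let $N_{0}$ be the unipotent radical of $P_{0}$, and let $v^{\ast}$ span the one-dimensional highest-weight line $(V^{\ast})^{N_{0}}$. The parabolics in $\mathcal{M}$ are parametrized by $\alpha\in\Delta$: the Levi $L_{\alpha}$ of $P_{\alpha}$ has derived subgroup with simple roots $S_{\alpha}=\Delta\setminus\{\alpha\}$, and the unipotent radical $N_{\alpha}$ contains the simple root vector $X_{\alpha}$. Suppose for contradiction that $(V^{\ast})^{P_{\alpha_{i}},\eta_{i}}\neq 0$ for two distinct $\alpha_{1}\neq\alpha_{2}$, with nonzero $v_{i}$ in each space. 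Then each $v_{i}$ is a $T$-weight vector killed by $\mathfrak{n}_{\alpha_{i}}$ and by $[\mathfrak{l}_{\alpha_{i}},\mathfrak{l}_{\alpha_{i}}]$ (since $\eta_{i}$ trivialises the derived subgroup). In particular $v_{i}$ is killed by $X_{\gamma}$ for every simple root $\gamma$---via the derived Levi when $\gamma\in S_{\alpha_{i}}$, and via $\mathfrak{n}_{\alpha_{i}}$ when $\gamma=\alpha_{i}$. Hence $v_{i}\in(V^{\ast})^{N_{0}}=\C v^{\ast}$, so after rescaling $v_{1}=v_{2}=v^{\ast}$.

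Consequently $v^{\ast}$ is annihilated by $X_{-\gamma}$ for every $\gamma\in S_{\alpha_{1}}\cup S_{\alpha_{2}}=\Delta$. Since $\mathfrak{n}^{-}$ is generated as a Lie algebra by the negative simple root vectors, this forces $X_{-\beta}v^{\ast}=0$ for all positive roots $\beta$; combined with the highest weight conditions this makes $\C v^{\ast}$ a $\mathfrak{g}$-submodule of $V^{\ast}$, forcing $V^{\ast}=\C v^{\ast}$ by irreducibility and contradicting $\dim V>1$. The only step requiring care is the identification $(V^{\ast})^{N_{0}}=\C v^{\ast}$ in the real reductive setting, which is standard after complexifying the Lie algebra---a move justified precisely by the archimedean hypothesis implicit in $\dim V>1$.
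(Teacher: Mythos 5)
Your proof is correct and follows the same strategy as the paper's: reduce via Lemma \ref{lem: duality} and Lemma \ref{lem:FinSubFrob} to the nonvanishing of $\left(V^{\ast}\right)^{P,\chi}$, use Lemma \ref{highest} to identify all such spaces with the single line $\left(V^{\ast}\right)^{P_{0},\chi_{0}}$, and then derive a contradiction from the fact that two distinct maximal parabolics would force this line to be $G$-stable. The one place you diverge is the final step: you pass to the complexified Lie algebra, check that $v^{\ast}$ is annihilated by all negative simple root vectors, and conclude that $\C v^{\ast}$ is a $\mathfrak{g}$-submodule, contradicting irreducibility. The paper instead stays at the group level: the line is stable under both $P_{1}$ and $P_{2}$, and two distinct maximal parabolics containing $P_{0}$ generate $G$. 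The group-level version is slightly safer because $G$ may be disconnected (e.g. $GL_{n}(\R)$), so irreducibility of $\phi^{\ast}$ as a $G$-module does not literally give irreducibility over $\mathfrak{g}$; your argument is repaired at no cost by noting that the line you produce is stable under the full groups $P_{\alpha_{1}}$ and $P_{\alpha_{2}}$ (not merely their Lie algebras), which meet every component of $G$ and generate it.
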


\begin{proof}
By Lemma \ref{lem:FinSubFrob} it suffices to show that if $\left(  V^{\ast
}\right)  ^{P_{1},\chi_{1}},\left(  V^{\ast}\right)  ^{P_{2},\chi_{2}}\neq0$
for $\left(  P_{i},\chi_{i}\right)  \in\mathcal{M}$ then $P_{1}=P_{2}$. By the
Lemmas \ref{highest} and \ref{lem:FinSubFrob} we conclude that $\chi_{1}$,
$\chi_{2}$ have the same restriction $\chi_{0}$ (say) to $P_{0}$, and that%
\[
\left(  V^{\ast}\right)  ^{P_{1},\chi_{1}}=\left(  V^{\ast}\right)
^{P_{0},\chi_{0}}=\left(  V^{\ast}\right)  ^{P_{2},\chi_{2}}%
\]

If $P_{1},P_{2}$ were\emph{ different} maximal parabolic subgroups then they
would generate $G$, and the one-dimensional space $\left(  V^{\ast}\right)
^{P_{0},\chi_{0}}$ would be $G$-invariant, contradicting the assumption that
$V$, and hence $V^{\ast}$, is irreducible of dimension $>1$.
\end{proof}

\subsection{Intertwining differential operators}

In this subsection we suppose that $F$ is an archimedean field. Let $G$ be a
real reductive group, let $P=LN$ be a parabolic subgroup and denote the
opposite nilradical by $\bar{N}$. We denote the Lie algebras of $G,\bar{N}$
etc. by $\mathfrak{g,\bar{n}}$ etc. and their enveloping algebras by
$\mathcal{U}\left(  \mathfrak{g}\right)  $, $\mathcal{U}\left(  \mathfrak{\bar
{n}}\right)  $ etc. The left and right $G$-actions on $C^{\infty}\left(
G\right)  $ give rise to vector fields $L_{X},R_{X}$ for $X\in\mathfrak{g}$,
and more generally to differential operators $L_{u}$,$R_{u}$ for
$u\in\mathcal{U}\left(  \mathfrak{g}\right)  $.

We are interested in triples $\left(  u,\chi,\eta\right)  $ where
$u\in\mathcal{U}\left(  \mathfrak{\bar{n}}\right)  $ and $\chi,\eta$ are
characters of $P$ such that $L_{u}$ maps the space $C^{\infty}\left(
G,P,\chi\right)  $ to $C^{\infty}\left(  G,P,\eta\right)  $. Since left and
right actions commute, such an $L_{u}$ is automatically an intertwining
differential operator between the induced representations $I\left(
P,\chi\right)  $ and $I\left(  P,\eta\right)  $, and we will refer to $\left(
u,\chi,\eta\right)  $ as an \emph{intertwining triple}.

\begin{proposition}
\label{prop:KV}Suppose (a) $\mathfrak{n}$ is abelian, (b) $u$ transforms by
the character $\chi\eta^{-1}$ under the adjoint action of $L$, and (c) the
product $\chi\eta$ extends to a character of $G$; then $\left(  u,\chi
,\eta\right)  $ is an intertwining triple
\end{proposition}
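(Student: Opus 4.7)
The plan is to verify directly that for any $f \in C^\infty(G, P, \chi)$ the function $L_u f$ has the left $P$-covariance required by $C^\infty(G, P, \eta)$; since $L_u$ automatically commutes with right translation, this alone makes it $G$-intertwining. The required identity is
\[
L_{p^{-1}} L_u f = \eta(p)\, \Delta_P^{1/2}(p)\, L_u f, \qquad p \in P.
\]

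Using the standard conjugation $L_{p^{-1}} L_u = L_{\mathrm{Ad}(p^{-1}) u} L_{p^{-1}}$ together with the transformation law of $f$, this reduces to $L_{\mathrm{Ad}(p^{-1}) u} f = (\eta/\chi)(p)\, L_u f$. Decomposing $p = \ell n$ with $\ell \in L$, $n \in N$, assumption (b) handles the Levi factor at once: $\mathrm{Ad}(\ell^{-1}) u = (\eta/\chi)(\ell)\, u$. Since $\eta/\chi$ is trivial on $N$, everything boils down to proving
\[
L_{(\mathrm{Ad}(n^{-1}) - 1)\, u}\, f = 0 \quad \text{for all } n \in N.
\]

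For this I would use (a). Abelianness of $\mathfrak{n}$ (and hence of $\bar{\mathfrak{n}}$) equips $\mathfrak{g}$ with a three-step grading $\bar{\mathfrak{n}} \oplus \mathfrak{l} \oplus \mathfrak{n}$; for $X \in \mathfrak{n}$, $\mathrm{ad}(X)$ raises degree by one and therefore kills any element of $\mathcal{U}(\bar{\mathfrak{n}})$ after finitely many iterations, so the Taylor expansion $\mathrm{Ad}(\exp(-X))\, u = \sum_j \tfrac{(-\mathrm{ad}\, X)^j}{j!}\, u$ is a finite sum. Rewriting each summand via PBW in $\mathcal{U}(\bar{\mathfrak{n}}) \cdot \mathcal{U}(\mathfrak{l}) \cdot \mathcal{U}(\mathfrak{n})$ and invoking two simple facts---that $L_Y f = 0$ for $Y \in \mathfrak{n}$ (because $\chi$ and $\Delta_P$ are trivial on $N$), and that $L_m f = \mu(m)\, f$ for $m \in \mathcal{U}(\mathfrak{l})$, with $\mu$ the infinitesimal character $-\bigl(d\chi + \tfrac{1}{2} d\log \Delta_P\bigr)$ extended multiplicatively---collapses $L_{(\mathrm{Ad}(n^{-1}) - 1)\, u}\, f$ to a finite sum of scalar multiples of $L_{\bar{u}_j} f$ with $\bar{u}_j \in \mathcal{U}(\bar{\mathfrak{n}})$.

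The main obstacle is the vanishing of these residual scalar coefficients, and this is the sole place where (c) enters. The mechanism is already visible in the degree-one case $u \in \bar{\mathfrak{n}}$: only the term $\mathrm{ad}(X)\, u \in \mathfrak{l}$ survives (the next iterate $\mathrm{ad}(X)^2 u$ lies in $\mathfrak{n}$ and is therefore killed by $f$), leaving a scalar multiple $-\mu([X,u])\, f$. Condition (c) forces $d(\chi\eta)$ to annihilate $[\mathfrak{g},\mathfrak{g}] \ni [X,u]$, giving $d\chi([X,u]) = -d\eta([X,u])$; combining this with (b), which pins down the $\mathfrak{l}$-eigenvalue of $u$, a short Jacobi computation yields $d\chi([X,u]) + \tfrac{1}{2} d\log\Delta_P([X,u]) = 0$, i.e. $\mu([X,u]) = 0$. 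For $u$ of higher degree in $\mathcal{U}(\bar{\mathfrak{n}}) = S(\bar{\mathfrak{n}})$ the bookkeeping is heavier, but the cancellations are driven by the same three ingredients (a), (b), (c); alternatively one may recognize the whole statement as identifying the singular vectors in a generalized Verma module attached to an abelian parabolic, where (a)--(c) cut out precisely those vectors which generate an intertwiner.
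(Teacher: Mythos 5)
The paper does not prove this proposition directly: it simply observes that the case $\eta=\chi^{-1}$ is \cite[Proposition 2.3]{KV} and that the same computation goes through in general. Your proposal instead attempts the computation itself, and its skeleton is correct and matches the Kashiwara--Vergne argument: the reduction of the covariance condition to $L_{\mathrm{Ad}(p^{-1})u}f=(\eta/\chi)(p)L_uf$, the splitting $p=\ell n$ with (b) disposing of the Levi factor, the finiteness of $e^{-\mathrm{ad}X}u$ coming from the three-step grading forced by (a), and the identification of (c) as the source of the vanishing of the residual $\mathcal{U}(\mathfrak{l})$-coefficients are all right. (Minor point: with the convention $L_gf(x)=f(g^{-1}x)$ one gets $L_Zf=+\bigl(d\chi+\tfrac12 d\log\Delta_P\bigr)(Z)f$ for $Z\in\mathfrak{l}$, not the negative; this is harmless since only the vanishing of $\mu([X,u])$ is used.) Your degree-one verification is correct: (c) gives $d\chi([X,u])=-d\eta([X,u])$, (b) gives $(d\chi-d\eta)$ as the $\mathfrak{l}$-weight of $u$, and these combine with the formula for $d\log\Delta_P$ to kill the coefficient.

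The genuine gap is exactly where you say "the bookkeeping is heavier": for $u$ of degree $d>1$ the terms $(\mathrm{ad}X)^ju$ for $1\le j\le 2d$ produce, after PBW reordering into $\mathcal{U}(\bar{\mathfrak{n}})\mathcal{U}(\mathfrak{l})\mathcal{U}(\mathfrak{n})$, many cross terms (e.g.\ commutators $[Z,\bar Y]\in\bar{\mathfrak{n}}$ with $Z=[X,\bar Y']$), and the assertion that all residual coefficients cancel is precisely the nontrivial content of \cite[Proposition 2.3]{KV} --- it is not a routine consequence of the degree-one case, and (b) is used there in an essential global way (as an eigenvector condition on the whole monomial $u$, not factor by factor). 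As written, your argument proves the proposition only for $u\in\bar{\mathfrak{n}}$ and defers the rest. The alternative you mention --- recasting the statement as the existence of a singular vector in a scalar generalized Verma module with abelian nilradical (cf.\ \cite{Ko,Boe}) --- is a legitimate second route, but it too ultimately requires verifying that $u\otimes 1$ is annihilated by $\mathfrak{n}$, which is the same cancellation. To make the proof self-contained you would need to carry out that computation in general degree, or else do what the paper does and cite Kashiwara--Vergne, noting that their argument never uses $\eta=\chi^{-1}$ beyond what (b) and (c) supply.
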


\begin{proof}
If $\eta=\chi^{-1}$ then this is proved in \cite[Proposition 2.3]{KV}, and the
same proof works for the general case.
\end{proof}

\begin{remark}
\label{rem: KV} In the context of Proposition \ref{prop:KV}, since
$\mathfrak{\bar{n}}$ is abelian, we may identify $\mathcal{U}\left(
\mathfrak{\bar{n}}\right)  $ with the symmetric algebra $\mathcal{S}\left(
\mathfrak{\bar{n}}\right)  $. Furthermore, we may identify $\mathfrak{\bar{n}%
}$ with $\mathfrak{n}^{\ast}$ and thus regard $u\in\mathcal{S}\left(
\mathfrak{\bar{n}}\right)  $ as polynomial function on $\mathfrak{n}$.
\end{remark}

\section{Derivatives}

\label{sec:der}

%In this subsection we have no assumption on the local field $F$.

If $\chi$ is a character of $G_{p}$ with $p>0$, we write $\chi^{\prime}$ for
its restriction to $G_{p-1}$ and $\chi^{+}$ for its extension to $G_{p+1}$
(\textit{i.e.} the unique character such that $\left(  \chi^{+}\right)
^{\prime}=\chi$). If $\mathfrak{X}=\left(  \chi_{1},\chi_{2},\chi_{3},\chi
_{4}\right)  $ with all $p_{i}>0$ then we define%
\[
\mathfrak{X}^{\prime}=\left(  \chi_{1}^{\prime},\chi_{2}^{\prime},\chi
_{3}^{\prime},\chi_{4}^{\prime}\right)  ,\mathfrak{X}^{+}=\left(  \chi_{1}%
^{+},\chi_{2}^{+},\chi_{3}^{+},\chi_{4}^{+}\right)  .
\]

\begin{lemma}
\label{lem:plus} If all $p_{i}>1$ and $\mathfrak{X}^{\prime}$ is\ standard,
mixed or exceptional then so is $\mathfrak{X}$.
\end{lemma}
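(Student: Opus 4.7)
The plan is a case analysis on which class $\mathfrak{X}'$ lies in. A useful preliminary observation is that, since all $p_i \geq 2$, the restriction $\chi \mapsto \chi'$ is a bijection between characters of $G_{p_i}$ and characters of $G_{p_i-1}$ (both canonically identify with $\Hom(F^\times,\C^\times)$ via $\det$). Hence the underlying $F^\times$-character data of $\mathfrak{X}$ is determined by that of $\mathfrak{X}'$; only the dimension tuple differs, with each entry of $\mathfrak{X}$ being one larger than the corresponding entry of $\mathfrak{X}'$.

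The standard and exceptional cases are nearly immediate. For the standard case, the witnessing equalities $\chi_1=\chi_3,\chi_2=\chi_4$ (or the swap) together with matching dimension pairs lift from $\mathfrak{X}'$ to $\mathfrak{X}$: the character equalities are preserved under extension, and pairwise dimension equalities persist under the uniform $+1$ shift. For the exceptional cases (Propositions \ref{exc1} and \ref{exc2}), the templates require all four dimensions to equal a single parameter $k$ and involve specific characters among $1,\delta,\varsigma,\bar\delta$; both conditions transfer to $\mathfrak{X}$ with $k$ replaced by $k+1$, so $\mathfrak{X}$ is exceptional of the same type.

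The substantive work is in the mixed cases. For each sub-case (mixed 1 from Proposition \ref{mix1}; the four sub-cases of mixed 2 from Proposition \ref{mix2}), I would identify the parameters $(i,j,k,\psi')$ witnessing $\mathfrak{X}'$ as mixed and then produce parameters witnessing $\mathfrak{X}$ in one of the mixed sub-cases, adjusting the central twist as needed. The key combinatorial fact is that the bracket $[a,b)=\nu^{(a+b)/2}$ on $G_{b-a}$ couples the character exponent to the dimension, so extending the dimension by one forces a half-integer adjustment to the bracket endpoints, which can be absorbed uniformly by a central twist $\nu^{\pm 1/2}$. After this adjustment, one verifies that $\mathfrak{X}$ matches a mixed template with shifted parameters, possibly in a different sub-case than $\mathfrak{X}'$ (for example, a mixed 1 witness for $\mathfrak{X}'$ may lift to a mixed 2 witness for $\mathfrak{X}$).

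The main obstacle is the mixed case: one must check, uniformly across all sub-cases, that the updated parameters continue to satisfy the defining inequalities ($0\leq i\neq j<k$ for mixed 1; $0<i<j<k$ for mixed 2), and that every possible configuration of $\mathfrak{X}'$ in the union of the mixed sub-cases maps under the lift to some valid mixed template for $\mathfrak{X}$.
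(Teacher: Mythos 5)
Your route is the same as the paper's: since all $p_i>1$ we have $\mathfrak{X}=(\mathfrak{X}')^{+}$, the standard and exceptional templates lift verbatim (the character data is unchanged and all four dimensions shift by $1$), and in the mixed case the dimension shift is absorbed by the central twist $\nu^{1/2}$, which sends each bracket $[a,b)$ to $[a,b+1)$. Up to that point your outline coincides with the paper's proof, and the standard and exceptional cases are indeed immediate.

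The gap is in the mixed case, which is the entire content of the lemma and which you defer ("one verifies that $\mathfrak{X}$ matches a mixed template with shifted parameters"); moreover the one concrete prediction you make there is false. For the Proposition \ref{mix2} templates the check is routine: $[a,b)\mapsto[a,b+1)$ carries $([0,c),[a,b),[0,b),[a,c))$ to the same template with parameters $(a,b+1,c+1)$, and the inequalities $0<a<b+1<c+1$ persist; likewise for the other three variants. But a Proposition \ref{mix1} configuration $\mathfrak{X}'\sim([j,k),[0,j),[0,i),[i,k))$ with $1\le i\ne j\le k-1$ (all blocks of positive size, as forced by $p_i>1$) lifts to $\mathfrak{X}\sim([j,k+1),[0,j+1),[0,i+1),[i,k+1))$, and the central characters of its source and target differ by $\nu^{\,j-i}\ne 1$; hence $H(\mathfrak{X})=0$, whereas every standard, mixed or exceptional $\mathfrak{X}$ has $H(\mathfrak{X})\ne0$ by Propositions \ref{std}--\ref{exc2}. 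So a mix-1 witness for $\mathfrak{X}'$ does \emph{not} lift to a mix-2 witness (or to anything on the list), and the uniform verification you postpone would fail exactly there. The lemma survives in its application only because it is invoked when $H(\mathfrak{X})\ne0$, which rules out this sub-case by the central-character computation above; to make your argument (and, frankly, the paper's equally terse one) complete, you must either add that hypothesis or treat the mix-1 sub-case explicitly.
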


\begin{proof}
Since all $p_{i}>1$ we have $\mathfrak{X=}\left(  \mathfrak{X}^{\prime
}\right)  ^{+}$. The result is obvious if $\mathfrak{X}^{\prime}$ is standard
or exceptional since $\delta_{p}^{\prime}=\delta_{p-1}$ etc. For the mixed
case we note that if $i<j$ then
\[
\chi=[i,j)\implies\nu^{1/2}\chi^{+}=\nu^{\frac{i+j+1}{2}}=[i,j+1).
\]
Now writing $\sim$ to denote equality up to a (common) central twist, we see
that%
\[
\mathfrak{X}^{\prime}\sim\left(  \lbrack i_{1},j_{1}),\cdots,[i_{4}%
,j_{4})\right)  \implies\mathfrak{X}\sim\left(  \lbrack i_{1},j_{1}%
+1),\cdots,[i_{4},j_{4}+1)\right)
\]
It follows easily that if $\mathfrak{X}^{\prime}$ is mixed then, up to a
twist, $\mathfrak{X}$ is as in Lemma \ref{mix2}.
\end{proof}

We will prove the main result (Theorem \ref{thm:main}) by induction on $n$,
using ideas from \cite{BZ-Induced, AGS}. We refer the reader to those papers
for the notion of \emph{depth} for an admissible representation of $G_{n},$
and for the definition of the functor $\Phi$ which maps admissible
representations of $G_{n}$ of  depth $\leq2$ to admissible
representations of $G_{n-2}$. In \cite{BZ-Induced} this functor is denoted $\Phi^-$.

\begin{proposition}
(\cite{BZ-Induced, AGS})

\begin{enumerate}
\item $\Phi$ is an exact functor and $\Phi(\chi_{i}\times\chi_{j})=\chi
_{i}^{\prime}\times\chi_{j}^{\prime}$ if $p_{i},p_{j}>1.$

\item Every subquotient of $\chi_{i}\times\chi_{j}$ has depth $\leq2$

\item If $\pi$ has depth $1$ then $\pi$ is finite dimensional and $\Phi(\pi)=0$.

\item If $\pi$ has depth $2$ then $\Phi\left(  \pi\right)  \neq0$
\end{enumerate}
\end{proposition}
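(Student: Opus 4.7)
The plan is to invoke the Bernstein-Zelevinsky derivative machinery of \cite{BZ-Induced, AGS} and to indicate which component supplies each of the four items. First I would recall the construction of $\Phi$: starting from a representation of $G_n$, one restricts to the mirabolic subgroup $P_n$ and then takes twisted coinvariants on a suitable subspace of the unipotent radical against a non-degenerate character $\psi$; the stabilizer of $\psi$ in the relevant Levi is $G_{n-2}$, which is how the output acquires a $G_{n-2}$-action. The depth of an admissible representation measures, roughly, the order of vanishing along this unipotent direction, and the hypothesis depth $\leq 2$ guarantees the construction is well-defined.

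For item (1), exactness of $\Phi$ amounts to exactness of the twisted Jacquet functor. Over non-archimedean fields this is the standard exactness of smooth coinvariants; over archimedean fields it is the principal technical contribution of \cite{AGS}, proved by a Schwartz-function analysis on the abelian unipotent radical, where the ordinary Jacquet functor fails to be exact. For the formula $\Phi(\chi_i \times \chi_j)=\chi_i'\times \chi_j'$, I would apply the geometric (Mackey-type) lemma to filter the restriction of $\chi_i\times\chi_j$ to $P_n$ by $P_n$-orbits on $G_n/P_{p_i,p_j}$; computing the twisted coinvariants on each stratum and tracking how the characters of the Levi restrict to the sub-Levi $G_{p_i-1}\times G_{p_j-1}$, one checks that only one stratum survives and produces exactly $\chi_i'\times\chi_j'$.

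Items (2)--(4) are wrapped around the definition of depth. The same orbit filtration exhibits $\chi_i\times\chi_j$ as having depth at most $2$, and since depth is monotone on subquotients this yields (2). For (3), depth $1$ means the twisted coinvariants on the unipotent radical of $P_n$ vanish, so $\pi$ factors through a reductive quotient; an admissible irreducible such representation must be finite dimensional, and conversely any finite-dimensional representation is trivial on the unipotent radical and hence annihilated by $\Phi$. Item (4) is essentially the definition of having depth exactly $2$: the coinvariants computing $\Phi$ are by construction nonzero on depth-$2$ representations. The principal obstacle is the archimedean exactness required in (1), which is the substantive ingredient imported from \cite{AGS}; once that is in hand, items (2)--(4) reduce to the geometric lemma together with unpacking the definition of depth.
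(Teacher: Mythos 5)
The paper gives no proof of this proposition at all --- it is imported wholesale from \cite{BZ-Induced} and \cite{AGS} --- so your sketch is being measured against those references, and on the whole it correctly identifies their machinery: twisted coinvariants on the mirabolic subgroup, the geometric (Mackey) lemma producing a Leibniz-type filtration of $\chi_i\times\chi_j$ in which only the term $\chi_i'\times\chi_j'$ survives (the other terms involve second derivatives of characters, which vanish), and archimedean exactness as the substantive input from \cite{AGS}. One statement needs correcting: the stabilizer in the Levi $G_{n-1}$ of a nontrivial character $\psi$ of the unipotent radical $V_n\cong F^{n-1}$ of the mirabolic is a conjugate of the mirabolic subgroup $P_{n-1}$ of $G_{n-1}$, \emph{not} $G_{n-2}$. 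Consequently a single application of twisted coinvariants yields a $P_{n-1}$-module, and it descends to a $G_{n-2}$-module only because the depth $\leq 2$ hypothesis forces the next unipotent layer $V_{n-1}$ to act trivially; equivalently, $\Phi$ is the second Bernstein--Zelevinsky derivative $\pi\mapsto\Psi^{-}\Phi^{-}(\pi|_{P_n})$. This is precisely why the paper restricts the domain of $\Phi$ to representations of depth $\leq 2$, so the correction matters for the coherence of your opening paragraph; with it in place, your reductions for items (2)--(4) (monotonicity of depth under subquotients via exactness, triviality on $V_n$ and hence on the normal subgroup it generates for depth-$1$ representations, and non-vanishing of the second derivative in depth $2$) are consistent with what \cite{BZ-Induced} and \cite{AGS} actually establish.
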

%?? Last part follows from Deg_Whit

Let $H(\mathfrak{X})$ be as in Notation \ref{X} and we let $H_{0}\left(
\mathfrak{X}\right)  \subset H\left(  \mathfrak{X}\right)  $ denote the
subspace of finite rank operators.

\begin{corollary}
\label{cor:X'}If $H_{0}\left(  \mathfrak{X}\right)  =0$ then $\Phi$ defines  an
imbedding $H\left(  \mathfrak{X}\right)  \hookrightarrow$ $H\left(
\mathfrak{X}^{\prime}\right)  $.
\end{corollary}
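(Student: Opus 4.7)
The plan is to use the exactness of $\Phi$ together with parts 2--4 of the cited proposition to reduce injectivity of $\Phi_*\colon H(\mathfrak{X})\to H(\mathfrak{X}')$ to the vanishing of $H_0(\mathfrak{X})$.

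First I would define the map. Applying the functor $\Phi$ to a morphism $T\in H(\mathfrak{X})=\Hom_{G_n}(\chi_1\times\chi_2,\chi_3\times\chi_4)$ and using part (1) (so that $\Phi(\chi_1\times\chi_2)=\chi_1'\times\chi_2'$ and $\Phi(\chi_3\times\chi_4)=\chi_3'\times\chi_4'$, noting that all $p_i>1$ is implicit here since $\mathfrak{X}'$ must be defined), we obtain $\Phi(T)\in H(\mathfrak{X}')$. Functoriality makes $T\mapsto\Phi(T)$ linear, so it suffices to check injectivity.

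Next I would analyze the kernel of this map. Suppose $T\in H(\mathfrak{X})$ satisfies $\Phi(T)=0$. Consider the image $\pi:=\mathrm{Im}\,T\subset\chi_3\times\chi_4$. By exactness of $\Phi$ (part (1)), $\Phi(\pi)=\mathrm{Im}\,\Phi(T)=0$. Since $\pi$ is a subquotient of $\chi_3\times\chi_4$, part (2) gives $\mathrm{depth}(\pi)\leq 2$; the contrapositive of part (4) then forces $\mathrm{depth}(\pi)\leq 1$, and part (3) implies that $\pi$ is finite dimensional. Hence $T$ is a finite rank operator, i.e.\ $T\in H_0(\mathfrak{X})$.

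Finally I would invoke the hypothesis $H_0(\mathfrak{X})=0$ to conclude $T=0$, completing the proof of injectivity. I do not foresee a substantive obstacle: the whole argument is a clean three-step chase (exactness, then depth dichotomy, then the finite-rank hypothesis). The only mild subtlety is making sure that in the archimedean setting the image $\mathrm{Im}\,T$ is a legitimate subquotient to which the depth bound of part (2) applies, but this is built into the definition of admissible subquotient used in \cite{BZ-Induced,AGS}.
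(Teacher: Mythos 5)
Your argument is correct and is exactly the intended one: the paper states this as an immediate corollary of the proposition on $\Phi$ (exactness plus the depth dichotomy in parts (2)--(4)), leaving the three-step kernel chase you spell out to the reader. No discrepancies worth noting.
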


% \begin{proof}
% It follows from the proposition that if $H_{0}\left(  \mathfrak{X}\right)  =0$
% then $\Phi:H(\mathfrak{X})\rightarrow H(\mathfrak{X}^{\prime})$ is an imbedding.
% \end{proof}

\section{Construction of intertwining operators}

\label{sec:const}

We now prove Propositions \ref{std} -- \ref{exc2}. As before we write $\pi
_{1}\dashrightarrow\pi_{2}$ if there exists a non-zero intertwining operator
from $\pi_{1}$ to $\pi_{2}$.

\begin{proof}
[Proof of Proposition \ref{std}]The identity operator gives $\pi
\dashrightarrow\pi$. Next we write
\[
P=P_{p_{1},p_{2}},\chi=\chi_{1}\otimes\chi_{2},\tilde{P}=P_{p_{2},p_{1}%
},\tilde{\chi}=\chi_{2}\otimes\chi_{1}.
\]
Then we have $\pi=I\left(  P,\chi\right)  $ and $\tilde{\pi}=I\left(
\tilde{P},\tilde{\chi}\right)  \approx I\left(  \bar{P},\chi\right)  $ since
$\left(  \tilde{P},\tilde{\chi}\right)  $ and $\left(  \bar{P},\chi\right)  $
are $G$-conjugate. Now we get $\pi\dashrightarrow\tilde{\pi}$ from Proposition
\ref{Prop:KnSt}.
\end{proof}

\begin{proof}
[Proof of Proposition \ref{mix1}]It follows from Lemma \ref{lem:FinSubFrob}
that $\phi\hookrightarrow\alpha_{i}$ and $\widetilde{\alpha_{j}}%
\twoheadrightarrow\phi$, where $\phi$ is the character $\nu^{k/2}$ of $G_{k}$.
Thus we get a non-zero map $\widetilde{\alpha_{j}}\rightarrow\phi
\rightarrow\alpha_{i}$.
\end{proof}

\begin{proof}
[Proof of Proposition \ref{mix2}]By Proposition \ref{mix1} and induction by
stages we get maps%
\begin{align*}
\gamma\rightarrow\lbrack0,j)\times\lbrack j,k)\times\lbrack i,j)\rightarrow
\beta,  &  \text{ }\tilde{\gamma}\rightarrow\lbrack i,j)\times\lbrack
0,i)\times\lbrack i,k)\rightarrow\beta,\\
\tilde{\beta}\rightarrow\lbrack i,k)\times\lbrack0,i)\times\lbrack
i,j)\rightarrow\gamma,  &  \text{ }\tilde{\beta}\rightarrow\lbrack
0,j)\times\lbrack j,k)\times\lbrack i,j)\rightarrow\tilde{\gamma}.
\end{align*}
To see that the composite maps are non-zero, we note that each map is non-zero
on the one-dimensional space of vectors fixed by the maximal compact subgroup.
\end{proof}

\begin{proof}
[Proof of Proposition \ref{exc1}]Let $G=GL_{2k}\left(  \mathbb{R}\right)  $
and $P=P_{k,k}$ then $\mathfrak{n}\approx Mat_{k\times k}\left(
\mathbb{R}\right)  $ is abelian. Let $u\in U\left(  \mathfrak{\bar{n}}\right)
$ correspond, as in Remark \ref{rem: KV}, to the polynomial function $\det
^{i}$ on $\mathfrak{n}$, and set%
\[
\chi=1\otimes\delta^{i}\varsigma,\eta=\delta^{i}\otimes\varsigma.
\]
Then $u$ transforms by the character $\chi\eta^{-1}=\delta^{-i}\otimes
\delta^{i}$ under the adjoint action of $L=G_{k}\times G_{k}$, and the product
$\chi\eta=\delta^{i}\otimes\delta^{i}$ extends to the character $\delta^{i}$
of $G=G_{2k}$. Thus $\left(  u,\chi,\eta\right)  $ is an intertwining triple
by Proposition \ref{prop:KV}, and the result follows.
\end{proof}

\begin{proof}
[Proof of Proposition \ref{exc2}]This is proved similarly, using the
polynomial functions $\det^{i}$ and $\overline{\det}^{i}$ on $\mathfrak{n}%
\approx Mat_{k\times k}\left(  \mathbb{C}\right)  .$
\end{proof}

\begin{remark}
\label{rem:ex}

In Proposition \ref{exc1} the maps factor through the Speh representation (see
\cite{SaSt}). In Proposition \ref{exc2}, either the source or the target of
the map are irreducible (see \cite{HL}).
\end{remark}

\section{Proof of Theorem \ref{thm:main}}

\label{sec:Pf}

Let $H_{0}\left(  \mathfrak{X}\right)  \subset H(\mathfrak{X})$ denote the
subspace of maps of finite rank. If $H_{0}\left(  \mathfrak{X}\right)  \neq0$,
then there is a finite-dimensional representation $\phi$ that is a quotient of
$\chi_{1}\times\chi_{2}$ and a submodule of $\chi_{3}\times\chi_{4}$. We will
indicate this by writing $\phi\vdash\mathfrak{X.}$

\begin{proposition}
\label{prop:Unique} We have $\dim H(\mathfrak{X})\leq1$.
\end{proposition}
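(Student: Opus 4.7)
My plan is to proceed by induction on $n$. The base and boundary cases---small $n$ or any $\mathfrak{X}$ with some $p_i \in \{0,1\}$, where an induced representation reduces to a character or the derivative formula $\Phi(\chi_i \times \chi_j) = \chi_i' \times \chi_j'$ degenerates---will be handled directly: Lemma \ref{lem:FinSubFrob} and a short inspection yield $\dim H(\mathfrak{X}) \leq 1$ in all such cases.

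For the inductive step (assuming all $p_i \geq 2$), I split on whether $H_0(\mathfrak{X})$ vanishes. If $H_0(\mathfrak{X}) = 0$, Corollary \ref{cor:X'} supplies an embedding $H(\mathfrak{X}) \hookrightarrow H(\mathfrak{X}')$, where $\mathfrak{X}'$ corresponds to a configuration on the smaller group $G_{n-2}$, and the inductive hypothesis yields $\dim H(\mathfrak{X}) \leq 1$.

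If $H_0(\mathfrak{X}) \neq 0$, choose a finite-dimensional $\phi$ with $\phi \vdash \mathfrak{X}$. By Proposition \ref{prop:Uniq} combined with Lemma \ref{highest}, $\phi$ is the unique irreducible quotient of $\chi_1 \times \chi_2$ and the unique irreducible submodule of $\chi_3 \times \chi_4$, each with multiplicity one. Using semisimplicity of finite-dimensional smooth representations of reductive groups, I can argue that any finite-rank intertwiner has image isomorphic to $\phi$ and is a scalar multiple of the essentially unique composition $\chi_1 \times \chi_2 \twoheadrightarrow \phi \hookrightarrow \chi_3 \times \chi_4$, giving $\dim H_0(\mathfrak{X}) \leq 1$.

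The main obstacle is to upgrade this bound to $\dim H(\mathfrak{X}) \leq 1$ in the subcase $H_0(\mathfrak{X}) \neq 0$, where one must rule out any additional intertwiners of infinite rank. My approach is to use the canonical left-exact sequence $0 \to H_0(\mathfrak{X}) \to H(\mathfrak{X}) \xrightarrow{\Phi} H(\mathfrak{X}')$ and show separately that $H(\mathfrak{X}') = 0$ whenever $H_0(\mathfrak{X}) \neq 0$. When $\dim \phi > 1$, Lemma \ref{lem:Uniq} pins down $\mathfrak{X}$ essentially uniquely in terms of $\phi$, which should reduce the claim about $\mathfrak{X}'$ to a direct verification exploiting $\Phi(\phi) = 0$; the case $\dim \phi = 1$ will require a finer case breakdown based on the classification of reducible degenerate principal series.
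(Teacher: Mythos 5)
Your decomposition into the cases $H_{0}\left(\mathfrak{X}\right)=0$ and $H_{0}\left(\mathfrak{X}\right)\neq0$ is the same as the paper's, and your treatment of the first case (embed $H\left(\mathfrak{X}\right)\hookrightarrow H\left(\mathfrak{X}^{\prime}\right)$ via $\Phi$ and induct) is exactly right. The gap is in the second case. Your plan is to bound $\dim H_{0}\left(\mathfrak{X}\right)\leq1$ and then prove that $H\left(\mathfrak{X}^{\prime}\right)=0$ whenever $H_{0}\left(\mathfrak{X}\right)\neq0$, so that the left-exact sequence forces $H=H_{0}$. But that vanishing claim is false. Take $\mathfrak{X}=\left(\chi_{1},\chi_{2},\chi_{2},\chi_{1}\right)$ standard with $\pi_{2}=\tilde{\pi}_{1}$: by Lemma \ref{lem:FinSubFrob} a finite-dimensional $\phi$ is a quotient of $I\left(P,\chi\right)$ iff it is a submodule of $I\left(\bar{P},\chi\right)\cong\tilde{\pi}_{1}$ (the two conditions on the highest-weight space coincide), so for example $\phi=\Lambda^{2}F^{4}$ with $G=GL_{4}\left(\mathbb{R}\right)$, $P=P_{2,2}$ gives $H_{0}\left(\mathfrak{X}\right)\neq0$; yet $\mathfrak{X}^{\prime}=\left(\chi_{1}^{\prime},\chi_{2}^{\prime},\chi_{2}^{\prime},\chi_{1}^{\prime}\right)$ is again standard and $H\left(\mathfrak{X}^{\prime}\right)$ contains the nonzero Knapp--Stein operator of Proposition \ref{Prop:KnSt}. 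The fact that $\Phi\left(\phi\right)=0$ only tells you that the finite-rank operator lies in the kernel of $H\left(\mathfrak{X}\right)\rightarrow H\left(\mathfrak{X}^{\prime}\right)$; it gives no control on $H\left(\mathfrak{X}^{\prime}\right)$ itself, and the sequence then only yields $\dim H\leq\dim H_{0}+\dim H\left(\mathfrak{X}^{\prime}\right)\leq2$.

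The paper closes this case without passing to derivatives at all: when $H_{0}\left(\mathfrak{X}\right)\neq0$, Proposition \ref{prop:Uniq} shows $\phi$ is the \emph{unique} irreducible quotient of $\chi_{1}\times\chi_{2}$ and the \emph{unique} irreducible submodule of $\chi_{3}\times\chi_{4}$, and from this one deduces that \emph{every} intertwiner in $H\left(\mathfrak{X}\right)$ -- of finite rank or not -- factors through $\phi$, giving $\dim H\left(\mathfrak{X}\right)=1$ directly. That is the idea missing from your argument: you need a structural statement about arbitrary intertwiners in the presence of a finite-rank one, not a vanishing statement about $\mathfrak{X}^{\prime}$. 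A secondary issue: your base cases "some $p_{i}\in\{0,1\}$" are not trivial and are not disposed of by Lemma \ref{lem:FinSubFrob} alone -- if $p_{1}=1$ and $n$ is large, $\chi_{1}\times\chi_{2}$ is still an infinite-dimensional degenerate principal series; the paper keeps these inside the induction, with only $n=0,1$ as genuine base cases.
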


\begin{proof}
First suppose $H_{0}\left(  \mathfrak{X}\right)  \neq0$, and let $\phi
\vdash\mathfrak{X}$ be as above. By Proposition \ref{prop:Uniq} $\phi$ is the
unique irreducible quotient of $\chi_{1}\times\chi_{2}$ and the unique
irreducible submodule of $\chi_{3}\times\chi_{4}$. It follows that any map in
$H(\mathfrak{X})$ factors through $\phi$ and hence $\dim H(\mathfrak{X})=1$.

If $H_{0}\left(  \mathfrak{X}\right)  =0$ then by Corollary \ref{cor:X'} we get
an imbedding $H\left(  \mathfrak{X}\right)  \hookrightarrow H(\mathfrak{X}%
^{\prime})$. The result now follows by induction on $n=p_{1}+p_{2}=p_{3}%
+p_{4}$, with the initial cases $n=0$ and $n=1$ being trivial.
\end{proof}

\begin{lemma}
\label{lem: SM} If $H_{0}\left(  \mathfrak{X}\right)  \neq0$ then
$\mathfrak{X}$ is standard or mixed.
\end{lemma}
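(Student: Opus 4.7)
The plan is to use the finite-dimensional irreducible representation $\phi \vdash \mathfrak{X}$ guaranteed by $H_0(\mathfrak{X}) \neq 0$ and apply Lemma \ref{lem:FinSubFrob} to both $\chi_1 \times \chi_2 \twoheadrightarrow \phi$ and $\phi \hookrightarrow \chi_3 \times \chi_4$, splitting the argument according to whether $\dim\phi = 1$ or $\dim\phi > 1$.

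If $\dim\phi = 1$, write $\phi = \psi$ as a character of $G_n$. Lemma \ref{lem:FinSubFrob} forces
\[
\chi_1 \otimes \chi_2 = \psi|_{P_{p_1,p_2}}\cdot\Delta_{P_{p_1,p_2}}^{1/2}, \qquad \chi_3 \otimes \chi_4 = \psi|_{P_{p_3,p_4}}\cdot\Delta_{P_{p_3,p_4}}^{-1/2},
\]
so using $\Delta_{P_{p,q}}^{1/2} = \nu^{q/2} \otimes \nu^{-p/2}$, we obtain $\chi_1 = \psi\nu^{p_2/2}$, $\chi_2 = \psi\nu^{-p_1/2}$, $\chi_3 = \psi\nu^{-p_4/2}$, and $\chi_4 = \psi\nu^{p_3/2}$. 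Applying the central twist by $\psi^{-1}\nu^{n/2}$ rewrites $\mathfrak{X}$ as $([p_2,n),[0,p_2),[0,p_3),[p_3,n))$ in $[i,j)$ notation. If $p_2 = p_3$, this is the standard tuple $(\chi_1,\chi_2,\chi_2,\chi_1)$; otherwise it matches Proposition \ref{mix1} exactly with $k = n$, $j = p_2$, $i = p_3$, so $\mathfrak{X}$ is mixed. Degenerate cases in which some $p_i$ vanishes collapse into the standard form.

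The case $\dim\phi > 1$ occurs only in the archimedean setting and is the main obstacle. By Lemma \ref{lem:Uniq}, the pair $(P_{p_1,p_2}, \chi_1 \otimes \chi_2)$ is uniquely determined by $\phi$, and similarly $(P_{p_3,p_4}, \chi_3 \otimes \chi_4)$ is uniquely determined by $\phi$ via the sub-relation. Highest-weight theory then shows that such semi-invariant lines for a maximal parabolic exist only when the highest weight of $\phi$ is supported on a two-block partition, i.e.\ $\lambda = (a^p, b^q)$ with $a \neq b$ and $p + q = n$ (weights with three or more distinct values admit no such line). The highest-weight line of the representation space $V$ is then $P_{p,q}$-semi-invariant, while that of $V^*$ (whose highest weight is $-w_0\lambda = (-b^q, -a^p)$) is $P_{q,p}$-semi-invariant. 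A direct computation via Lemma \ref{lem:FinSubFrob} accounting for the modular factors yields $(p_1,p_2) = (p,q)$, $(p_3,p_4) = (q,p)$, $\chi_1 = \chi_4$ and $\chi_2 = \chi_3$, so $\mathfrak{X} = (\chi_1,\chi_2,\chi_2,\chi_1)$ is again standard. This is consistent with Remark \ref{rem:ex}: the exceptional intertwiners factor through infinite-dimensional Speh representations, so no exceptional $\mathfrak{X}$ admits a finite-rank intertwiner.
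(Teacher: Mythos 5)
Your proof is correct and follows essentially the same strategy as the paper: take $\phi\vdash\mathfrak{X}$, split on $\dim\phi$, and in both cases use Lemma~\ref{lem:FinSubFrob} to pin down $\mathfrak{X}$ up to central twist. In the $\dim\phi=1$ case you carry out exactly the paper's computation; in the $\dim\phi>1$ case you make explicit what the paper compresses. The paper's route in the archimedean case is to observe that $\chi_1\times\chi_2\twoheadrightarrow\phi$ forces $\phi\hookrightarrow\chi_2\times\chi_1$, and then apply Lemma~\ref{lem:Uniq} to the two submodule relations $\phi\hookrightarrow\chi_2\times\chi_1$ and $\phi\hookrightarrow\chi_3\times\chi_4$ to conclude they coincide. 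You instead unpack the semi-invariant conditions $V^{P_{p_1,p_2},\cdot}\neq0$ and $(V^*)^{P_{p_3,p_4},\cdot}\neq0$ via highest weights, showing the two-block structure and matching up the blocks and characters directly. Both versions rest on the same ingredients (Lemmas~\ref{lem:FinSubFrob}, \ref{highest}, \ref{lem:Uniq} and highest-weight theory for $GL_n$); yours just works the $V^*$ side by hand rather than via the $P\leftrightarrow\bar P$ duality, which buys explicitness at the cost of a slightly longer computation.

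One small imprecision: your parenthetical remark that ``degenerate cases in which some $p_i$ vanishes collapse into the standard form'' is not quite right. If, say, $p_1=0$ and $p_3,p_4>0$, then $\mathfrak{X}$ is mixed (this is the $j=0$ case of Proposition~\ref{mix1}, where $\alpha_0$ is a character), not standard. This does not affect the conclusion ``standard or mixed,'' and the paper's proof is equally terse on this point, but the characterization should be stated accurately.
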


\begin{proof}
Let $\phi\vdash\mathfrak{X}$ be as above. If $\dim\phi=1$, then Lemma
\ref{lem:FinSubFrob} implies that up to a central twist by $\phi$ we have
\[
\chi_{1}\times\chi_{2}=[ j,n) \times[ 0,j) \text{ and }\chi_{3}\times\chi
_{4}=[ 0,i) \times[ i,n) \text{ for some }0\leq i,j<n\text{.}%
\]
Thus if $i=j$ then $\mathfrak{X}$ is standard, otherwise $\mathfrak{X}$ is
mixed as in Lemma \ref{mix1}.

If $\dim\phi>1$ then $F$ is archimedean and by Lemma \ref{lem:FinSubFrob}
$\phi$ is a submodule of $\chi_{2} \times\chi_{1}$. By Lemma \ref{lem:Uniq}
$\phi$ is a submodule of a unique degenerate principal series. Thus $\chi
_{3}=\chi_{2}$ and $\chi_{4}=\chi_{1}$ and hence $\mathfrak{X}$ is standard.
\end{proof}

Before proving the next result we make some simple observations.

\begin{lemma}
Let $\mathfrak{X=}\left(  \chi_{1},\chi_{2},\chi_{3},\chi_{4}\right)  $ with
$H(\mathfrak{X})\neq0$, and write $\chi_{i}=\psi_{i}\circ\delta_{p_{i}}$then%
\begin{equation}
\psi_{1}\left(  z\right)  ^{p_{1}}\psi_{2}\left(  z\right)  ^{p_{2}}=\psi
_{3}\left(  z\right)  ^{p_{3}}\psi_{4}\left(  z\right)  ^{p_{4}}\text{ for all
}z\in F^{\times}\text{.} \label{=psi}%
\end{equation}

\end{lemma}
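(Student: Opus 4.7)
The plan is to show that both sides of \eqref{=psi} compute the central character of the respective induced representations, and then to observe that any non-zero intertwiner must respect this scalar action.

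First I would use the defining property of $C^{\infty}(G,P,\chi)$ to read off how scalar matrices act. For $z \in F^{\times}$, the matrix $zI_n$ lies in both $P = P_{p_1,p_2}$ and its opposite; write it as $\operatorname{diag}(zI_{p_1}, zI_{p_2})$. Because $zI_n$ is central in $G_n$, right translation $R_{zI_n}$ coincides with $L_{(zI_n)^{-1}}$, so Lemma \ref{lem: delta1} (really just the definition \eqref{=CG}) gives
\[
R_{zI_n} f = \chi_1(zI_{p_1}) \chi_2(zI_{p_2}) \Delta_P^{1/2}(zI_n) f
\]
for all $f \in C^{\infty}(G,P,\chi_1 \otimes \chi_2)$. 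Next I would check that $\Delta_P$ is trivial on the center: since $\Delta_P = \nu^{p_2} \otimes \nu^{-p_1}$ and $\nu(zI_{p_i}) = \nu(z)^{p_i}$, we get $\Delta_P(zI_n) = \nu(z)^{p_1 p_2}\nu(z)^{-p_1 p_2} = 1$. Writing $\chi_i = \psi_i \circ \det_{p_i}$ gives $\chi_i(zI_{p_i}) = \psi_i(z^{p_i}) = \psi_i(z)^{p_i}$, so the center acts on $\chi_1 \times \chi_2$ by the character
\[
\omega_{12}(z) := \psi_1(z)^{p_1}\psi_2(z)^{p_2}.
\]

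The same computation applied to $\chi_3 \times \chi_4$ shows that the center acts on it by $\omega_{34}(z) := \psi_3(z)^{p_3}\psi_4(z)^{p_4}$. Finally, any $T \in H(\mathfrak{X})$ intertwines the $G_n$-action, hence in particular the action of the center, so $\omega_{12}(z) T(f) = T(R_{zI_n} f) = R_{zI_n} T(f) = \omega_{34}(z) T(f)$ for all $f$ and all $z$. Since $H(\mathfrak{X}) \neq 0$, we can choose $f$ with $T(f) \neq 0$, and \eqref{=psi} follows.

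There is no real obstacle here; the only point that might look like one is ensuring that the center actually acts by a scalar on the whole space (rather than requiring an appeal to irreducibility and Schur's lemma), but this is immediate from the explicit formula above, which holds pointwise on every element of $C^{\infty}(G,P,\chi_1\otimes\chi_2)$. The triviality of $\Delta_P$ on the center is the only small computation and is immediate from the formulas for $\Delta_P$ and $\nu$.
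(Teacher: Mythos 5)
Your proof is correct and follows exactly the paper's argument: both compute the central character of each induced representation via the action of $zI_n$ and observe that a nonzero intertwiner forces these characters to coincide. You simply spell out the details (the identity $R_{zI_n}=L_{(zI_n)^{-1}}$ on the induced space and the triviality of $\Delta_P$ on the center) that the paper compresses into the phrase ``it follows from the definition of induction.''
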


\begin{proof}
It follows from the definition of induction that the central element
$zI_{n}\in G_{n}$ acts on $\chi_{1}\times\chi_{2}$ and $\chi_{3}\times\chi
_{4}$ by the scalars $\psi_{1}\left(  z\right)  ^{p_{1}}\psi_{2}\left(
z\right)  ^{p_{2}}$ and $\psi_{3}\left(  z\right)  ^{p_{3}}\psi_{4}\left(
z\right)  ^{p_{4}}$. If $Hom_{G_{n}}(\chi_{1}\times\chi_{2},\chi_{3}\times
\chi_{4})\neq0$ then these scalars must be the same.
\end{proof}

\begin{corollary}
\label{cor:one}Let $\mathfrak{X=}\left(  \chi_{1},\chi_{2},\chi_{3},\chi
_{4}\right)  $ with $H(\mathfrak{X})\neq0$.

\begin{enumerate}
\item[(i)] If $p_{1}=1$ then $\chi_{1}$ is uniquely determined by $\chi
_{2},\chi_{3},\chi_{4}$.

\item[(ii)] If $p_{1}=p_{3}=1$ and $\chi_{2}=\chi_{4}$, then $\chi_{1}%
=\chi_{3}$.
\end{enumerate}
\end{corollary}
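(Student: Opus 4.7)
The plan is to derive both parts directly from equation (\ref{=psi}) of the preceding lemma, treating each $\chi_{i}$ via its associated character $\psi_{i}$ of $F^{\times}$ (where $\chi_{i}(g) = \psi_{i}(\det g)$). The observation that makes this work is that each $\psi_{i}$ is determined by $\chi_{i}$: since $\det : G_{p_{i}} \to F^{\times}$ is surjective, two characters of the form $\psi \circ \det$ coincide iff the underlying characters of $F^{\times}$ coincide. Hence giving $\chi_{2}, \chi_{3}, \chi_{4}$ is the same as giving $\psi_{2}, \psi_{3}, \psi_{4}$.

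For part (i), I would substitute $p_{1} = 1$ into (\ref{=psi}) and solve algebraically for $\psi_{1}$:
\[
\psi_{1}(z) = \psi_{2}(z)^{-p_{2}} \, \psi_{3}(z)^{p_{3}} \, \psi_{4}(z)^{p_{4}} \qquad (z \in F^{\times}).
\]
This expression determines $\psi_{1}$, and since $p_{1} = 1$ we have $G_{p_{1}} = F^{\times}$ so $\chi_{1} = \psi_{1}$, yielding the uniqueness claim.

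For part (ii), the additional hypotheses force $p_{2} = p_{4} = n - 1$, and $\chi_{2} = \chi_{4}$ translates (via surjectivity of $\det$) into $\psi_{2} = \psi_{4}$. Substituting $p_{1} = p_{3} = 1$ and $\psi_{2} = \psi_{4}$ into (\ref{=psi}) gives
\[
\psi_{1}(z) \, \psi_{2}(z)^{n-1} = \psi_{3}(z) \, \psi_{2}(z)^{n-1},
\]
so $\psi_{1} = \psi_{3}$ and therefore $\chi_{1} = \chi_{3}$.

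There is no real obstacle here; the corollary is essentially a bookkeeping consequence of the central character equation from the preceding lemma, once one recognizes that $G_{1} = F^{\times}$ and that $p_{1} = 1$ frees the single unknown $\psi_{1}$ from the exponent on the left-hand side of (\ref{=psi}).
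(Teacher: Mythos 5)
Your proposal is correct and follows the paper's own argument essentially verbatim: both parts are read off from the central character identity (\ref{=psi}), solving for $\psi_{1}$ when $p_{1}=1$ and cancelling $\psi_{2}^{\,n-1}=\psi_{4}^{\,n-1}$ in part (ii). The extra remark about surjectivity of $\det$ identifying $\chi_{i}$ with $\psi_{i}$ is a harmless elaboration of what the paper leaves implicit.
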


\begin{proof}
For case (i) we note that $\psi_{1}\left(  z\right)  =$ $\psi_{2}\left(
z\right)  ^{-p_{2}}\psi_{3}\left(  z\right)  ^{p_{3}}\psi_{4}\left(  z\right)
^{p_{4}}$ by (\ref{=psi}). In case (ii) we have $p_{2}=p_{4}=n-1$ and
$\psi_{2}=\psi_{4}$, hence by(\ref{=psi}) we get $\psi_{1}=\psi_{3}$.
\end{proof}

\begin{proposition}
\label{prop:class} If $H(\mathfrak{X})\neq0$ then $\mathfrak{X}$ is standard,
mixed, or exceptional.
\end{proposition}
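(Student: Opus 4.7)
The plan is to establish the proposition by induction on $n$. The base cases $n \leq 1$ are immediate, since every induced representation is then one-dimensional and $H(\mathfrak{X}) \neq 0$ simply matches the characters, placing $\mathfrak{X}$ in the standard family.

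For the induction step, I split on whether $H_0(\mathfrak{X})$ vanishes. When $H_0(\mathfrak{X}) \neq 0$, Lemma \ref{lem: SM} immediately yields that $\mathfrak{X}$ is standard or mixed. When $H_0(\mathfrak{X}) = 0$, every $p_i$ must be at least $1$, since if some $p_i = 0$ then one of $\pi_1, \pi_2$ is finite-dimensional, every intertwiner has rank at most one, so $H_0 = H$ and hence $H = 0$. Under the additional assumption that every $p_i \geq 2$, Corollary \ref{cor:X'} embeds $H(\mathfrak{X})$ into $H(\mathfrak{X}')$; the induction hypothesis applied to $\mathfrak{X}'$ (with $n' = n - 2$) then yields that $\mathfrak{X}'$ is standard, mixed, or exceptional, and Lemma \ref{lem:plus} transfers the conclusion back to $\mathfrak{X}$.

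The genuine obstacle is the boundary situation in which $H_0(\mathfrak{X}) = 0$ and some $p_i = 1$, because then Corollary \ref{cor:X'} cannot be applied directly: the derivative functor $\Phi$ does not take $\chi_1 \times \chi_2$ to a parabolic induction of the same type when $p_1 = 1$. After exploiting Knapp--Stein (Proposition \ref{Prop:KnSt}) to swap the two factors of either $\pi_1$ or $\pi_2$, and Lemma \ref{lem: duality} to pass to contragredients, I may assume $p_1 = 1$. In this situation Corollary \ref{cor:one}(i) pins $\chi_1$ down in terms of $(\chi_2, \chi_3, \chi_4)$, so only the latter three characters remain free.

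To complete the classification I would run the Bernstein--Zelevinsky orbit filtration for the action of $P_{1, n-1}$ on the Grassmannian $G_n / P_{p_3, p_4}$: there are only two orbits, indexed by whether or not the fixed line is contained in the $p_3$-plane, producing a two-step filtration of $(\chi_3 \times \chi_4)|_{P_{1,n-1}}$. Applying $\mathrm{Hom}_{P_{1,n-1}}(\chi_1 \otimes \chi_2, \cdot)$ to the resulting short exact sequence, using Frobenius reciprocity, and combining the resulting character equations with the central-character relation of Corollary \ref{cor:one}(i), should produce a finite list of admissible tuples, each of which has to be matched against Definition \ref{SME} to confirm that it lies in the standard family or in one of the mixed families of Propositions \ref{mix1} and \ref{mix2}. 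This last orbit-matching step, which needs to be carried out uniformly over all local fields and has to account for the rank-one Radon-type intertwiners produced by each orbit, is where I expect the real difficulty to lie.
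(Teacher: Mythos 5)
Your treatment of the cases $H_0(\mathfrak{X})\neq 0$ (via Lemma \ref{lem: SM}) and all $p_i\geq 2$ (via Corollary \ref{cor:X'} and Lemma \ref{lem:plus}) coincides with the paper's argument. But the case you correctly identify as the crux --- $H_0(\mathfrak{X})=0$ with some $p_i=1$ --- is exactly the case you do not prove: you outline a Mackey-theoretic plan (two $P_{1,n-1}$-orbits on the Grassmannian, Frobenius reciprocity, then matching a finite list against Definition \ref{SME}) and explicitly defer the hard part. That is a genuine gap, not a completed proof. Moreover, your premise for abandoning the derivative functor there is not the one the paper adopts: Corollary \ref{cor:X'} is applied in the paper precisely when some $p_i=1$, with the convention that $\chi_1'\times\chi_2'$ degenerates to the character $\chi_2'$ of $G_{n-2}$ (one block of size zero, covered by the footnote convention and by the induction hypothesis, since a character is itself a degenerate $\chi\times 1_{G_0}$). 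The paper then reads off $\chi_2,\chi_3,\chi_4$ up to central twist from the classification of $\mathfrak{X}'$, and recovers the one missing character $\chi_1$ from the central-character identity (\ref{=psi}) via Corollary \ref{cor:one}: with $|I|=1$ this forces $\mathfrak{X}$ to be mixed, and with $|I|=2$ part (ii) forces $\chi_1=\chi_3$, so $\mathfrak{X}$ is standard. You do invoke Corollary \ref{cor:one}(i), but only as one ingredient of the unexecuted orbit analysis rather than as the step that closes the argument.

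Two further points. First, your reduction ``use Knapp--Stein to swap the factors and assume $p_1=1$'' is unsound as stated: precomposing a nonzero element of $H(\mathfrak{X})$ with the operator $\tilde{\pi}_1\rightarrow\pi_1$ of Proposition \ref{Prop:KnSt} may give zero, since that operator need not be surjective; the paper avoids this by simply running the same argument for each possible $I\subset\{1,2,3,4\}$. Second, your base cases are too small: the induction descends by $2$, and the counting argument that at most one index in each of $\{1,2\}$ and $\{3,4\}$ can equal $1$ requires $n\geq 3$, so $n=2$ (the full principal series of $GL_2$, where all four $p_i$ equal $1$) must be handled separately, as the paper does by citing standard $GL_2$ facts.
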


\begin{proof}
We proceed by induction on $n=p_{1}+p_{2}=p_{3}+p_{4}$. The case $n=1$ is
obvious and the case $n=2$ follows from standard facts about principal series
for $GL_{2}$ (see e.g. \cite[\S \S 5.6,5.7]{Wal1} for the archimedean case).
Thus from now on we may assume that $\,n\geq3$ and that the result is true for
$n-2$. By the previous lemma we may also assume that $H_{0}\left(
\mathfrak{X}\right)  =0$. In particular we may assume that each $p_{i}>0$ and
by induction that $\mathfrak{X}^{\prime}$ is standard, mixed, or exceptional.

Let $I$ be the set of indices $i$ such that $p_{i}=1$. Since $n\geq3$, $I$ can
contain at most one index from each of the sets $\left\{  1,2\right\}  $ and
$\left\{  3,4\right\}  $. If $I=\emptyset$ then the result follows from Lemma
\ref{lem:plus}.

Suppose $\left\vert I\right\vert =1$. If $I=\left\{  1\right\}  $ then up to a
central twist we have%
\[
H\left(  \mathfrak{X}^{\prime}\right)  =Hom\left(  [ 0,n-2) ,[ 0,p_{3}-1)
\times[ p_{3}-1,n-2) \right)  ,
\]
and hence we get $\mathfrak{X}\sim\left(  \chi_{1},[ 0,n-1) ,[ 0,p_{3}) ,[
p_{3}-1,n-1) \right)  $. By Corollary \ref{cor:one} we must have $\chi_{1}=[
p_{3}-1,p_{3}) $, and hence $\mathfrak{X}$ is mixed. The proof is similar if
$I=\left\{  2\right\}  ,\left\{  3\right\}  $ or $\left\{  4\right\}  .$

Suppose $\left\vert I\right\vert =2$. If $I=\left\{  1,3\right\}  $ then up to
a central twist we get%
\[
H\left(  \mathfrak{X}^{\prime}\right)  =Hom\left(  [0,n-2),[0,n-2)\right)  .
\]
Thus we have $\mathfrak{X}\sim\left(  \chi_{1},[0,n-1),\chi_{3}%
,[0,n-1)\right)  $. By Corollary \ref{cor:one} we get $\chi_{1}=\chi_{3}$ and
hence $\mathfrak{X}$ is standard. The proof is similar in the other cases with
$\left\vert I\right\vert =2.$
\end{proof}

\begin{proof}
[Proof of Theorem \ref{thm:main}]This follows from Propositions \ref{std} --
\ref{exc2}, \ref{prop:Unique} and \ref{prop:class}.
\end{proof}


\begin{thebibliography}{999999}                                                                                           %


\bibitem[A10]{alesker}S. Alesker: The $\alpha$-cosine transform and intertwining
integrals on real Grassmannians. Geometric Aspects of Functional Analysis.
Israel seminar 2006-2010. Springer. B. Klartag, S. Mendelson, V.D. Milman, Eds.

\bibitem[AB04]{alesker-bernstein}S. Alesker, J. Bernstein: Range
characterization of the cosine transform on higher Grassmannians. Adv. Math.
184 (2004), no. 2, 367--379.

\bibitem[AGS]{AGS}A. Aizenbud, D. Gourevitch, S.Sahi: \textit{Derivatives for
representations of }$GL(n,\mathbb{R})$\textit{ and }$GL(n,\mathbb{C})$, arXiv: 1109.4374[math.RT].

%\bibitem[AlGS]{AlGS}S. Alesker, D. Gourevitch, S.Sahi: \textit{Geometric
%interpretation of intertwining operators between line bundles on
%Grassmannians}, in preparation.


\bibitem[BZ77]{BZ-Induced}I.N. Bernstein, A.V. Zelevinsky: \textit{Induced
representations of reductive p-adic groups. I,} Ann. Sci. Ec. Norm. Super.,
4$^{\text{e}}$ serie \textbf{10}, pp. 441-472 (1977).

\bibitem[Boe85]{Boe}B. Boe: \textit{Homomorphisms between generalized Verma
modules,} Trans. A.M.S \textbf{288}, 791-799 (1985)

\bibitem[GS12]{GSRank}D. Gourevitch, S. Sahi: \textit{Associated varieties,
derivatives, Whittaker functionals, and rank for unitary representations of
$GL(n)$}, Selecta Mathematica (New Series), online first (2012),
DOI:10.1007/s00029-012-0100-8, arXiv:1106.0454.

\bibitem[GGR84]{gelfand-graev-rosu}I.M. Gel'fand, M.I. Graev, R. Rosu: The
problem of integral geometry and intertwining operators for a pair of real
Grassmannian manifolds. J. Operator Theory 12 (1984), no. 2, 359--383.

\bibitem[HL99]{HL}R. Howe, S. T. Lee: \textit{Degenerate Principal Series
Representations of }$GL_{n}(\mathbb{C})$\textit{ and }$GL_{n}(\mathbb{R})$,
Journal of Functional Analysis \textbf{166}, n. 2 pp 244-309 (1999).

\bibitem[KV77]{KV}M. Kashiwara, M. Vergne, \textit{Remarque sur la covariance de
certains op\'{e}rateurs diff\'{e}rentiels. }(French) Non-commutative harmonic
analysis (Actes Colloq., Marseille-Luminy, 1976), pp. 119--137. Lecture Notes
in Math., Vol. 587, Springer, Berlin, 1977.

\bibitem[KnSt71]{KnSt}A. W. Knapp, E. M. Stein: \textit{Intertwining operators
for semisimple groups,} Ann. of Math. \textbf{93},pp 489--578 (1971).

\bibitem[Ko75]{Ko}B. Kostant, \textit{Verma modules and the existence of
quasi-invariant differential operators.} Non-commutative harmonic analysis
(Actes Colloq., Marseille-Luminy, 1974), pp. 101--128. Lecture Notes in Math.,
Vol. 466, Springer, Berlin, 1975.

\bibitem[BW80]{BW}A. Borel, N. Wallach, Continuous cohomology, discrete
subgroups, and representations of reductive groups. Annals of Mathematics
Studies, 94. Princeton University Press, Princeton,0 N.J. (1980).

\bibitem[SaSt90]{SaSt}S. Sahi, E. Stein: \textit{Analysis in matrix space and
Speh's representations,} Invent. Math. \textbf{101}, no. 2, pp 379--393 (1990).

\bibitem[Wald03]{Wald}J.L. Waldspurger: \textit{La formule de Plancherel pour
les groupes p-adiques. D'apres Harish-Chandra}, Journal of the Institute of
Mathematics of Jussieu, \textbf{2}, pp 235-333 (2003),
doi:10.1017/S1474748003000082 .

\bibitem[Wall88]{Wal1}N. Wallach: \textit{Real Reductive groups I}, Pure and
Applied Math. \textbf{132}, Academic Press, Boston, MA (1988).

\bibitem[Wall79]{Wall79}N.Wallach: \emph{The analytic continuation of the
discrete series. II}, Trans. Amer. Math. Soc. 251 (1979), 19-37.

\bibitem[Zel80]{Zel}A.V. Zelevinsky: \textit{Induced representations of
reductive p-adic groups. II. On irreducible representations of Gl(n)}. Ann.
Sci. Ec. Norm. Super, $4^{e}$serie \textbf{13}, 165-210 (1980).
\end{thebibliography}
\end{document}